\theoremstyle{plain}
\newtheorem{theorem}{Theorem}[section]
\newtheorem{lemma}[theorem]{Lemma}
\theoremstyle{definition}
\newtheorem{definition}{Definition}
\theoremstyle{remark}
\newtheorem{remark}[theorem]{Remark}
\begin{document}

\title[Stochastic semilinear evolution  equations]
{Note on abstract stochastic semilinear evolution  equations}

\author[T.V. T\d{a}]{T$\hat{\rm \text{o}}$n Vi$\hat{\d{e}}$t  T\d{a}}

\address{T$\hat{\rm \text{o}}$n Vi$\hat{\d{e}}$t T\d{a} \\
Promotive Center for International Education and Research of Agriculture \\
Faculty of Agriculture\\
 Kyushu University \\
 Higashi-ku, Fukuoka 812-8581, Japan\\
  }
\email{tavietton[at]agr.kyushu-u.ac.jp}

\thanks{This work was supported by JSPS KAKENHI Grant Number 20140047.}

\subjclass{Primary 60H15, 35R60; secondary 58D25}
\keywords{stochastic evolution equations, analytic semigroups, regularity}

\begin{abstract}
This paper is devoted to studying abstract stochastic semilinear  evolution  equations with additive noise in Hilbert spaces.  First, we prove   the existence of unique local mild solutions and show their regularity. Second, we show the regular dependence of the solutions on initial data. Finally,  some applications to stochastic partial differential equations are presented.
\end{abstract}

\maketitle

\section {Introduction}
Many interesting phenomena in the real world can be described by a system of nonlinear parabolic evolution equations. These equations generally not only generate a dynamical system but also  a global attractor even a finite-dimensional attractor. Such an attractor then suggests that the phenomena  enjoy some robustness in a certain abstract sense. Some may be the pattern formation and others may be the specific structure creation (\cite{Nguyen1,Nguyen2,Nguyen3}). In these cases, one of main issues is to study the robustness of the final states of system. It is therefore quite natural in order to investigate the robustness to consider an advanced version of stochastic parabolic evolution equations.

In this paper, 
we  study the Cauchy problem for an abstract stochastic semilinear evolution  equation: 
\begin{equation} \label{P1}
\begin{cases}
dX+AXdt=[F_1(X)+F_2(t)]dt+ G(t)dW(t), \hspace{1cm} t\in(0,T],\\
X(0)=\xi
\end{cases}
\end{equation}
in a  separable Hilbert space $H$. Here, $A\colon\mathcal D(A)\subset H\to H$ is a sectorial  operator. The process  $W$ is a cylindrical Wiener process on a  separable Hilbert space $U$, and is defined on a filtered  probability space  $(\Omega, \mathcal F,\mathcal F_t,\mathbb P)$. The function
 $F_1$ is  measurable  from $(\Omega\times H, \mathcal F_T\times \mathcal B(H))$  into $(H,\mathcal B(H))$.  Meanwhile, $F_2$ and $G$ are   measurable functions  from $([0,T], \mathcal B([0,T]))$ into $(H,\mathcal B(H))$  and 
 $(L_2(U;H),\mathcal B(L_2(U;H)))$, respectively (here  
   $L_2(U;H)$ denotes the space of all Hilbert-Schmidt operators from $U$ to $H$). The initial value 
  $ \xi$ is an $H$-valued $\mathcal  F_0$-measurable  random variable.

This kind of evolution equations has been investigated by several authors (see \cite{Brzezniak,Brzezniak1,pratoFlandoliPriolaRockner,prato,Ichikawa,Ton1,Ton1.5,Ton4,van2,Neerven}, and references therein). 
Under the Lipschitz continuity  and  linear growth conditions on $F_1$, Ichikawa \cite{Ichikawa} and Da Prato-Zabczyk \cite{prato} proved the existence  of unique global mild solutions in $L_p([0,T]; H)$, Neerven-Veraar-Weis \cite{Neerven} showed the existence of unique strong solutions in $L_p([0,T]; \mathcal D(A)).$
 In Banach space setting, Brzezniak \cite{Brzezniak1} (see also T\d{a}-Yagi \cite{Ton4}) showed the existence of maximal local mild solutions. The space-time regularity of solutions to 
 \eqref{P1} has however not been developed well for the case where the domain of $F_1$ is a subset of $H$, and $F_1$does not satisfy the linear growth condition. Such a case occurs very often in many phenomena described by partial differential equations (PDEs) (see e.g., Yagi \cite{yagi}).

In the present paper, we study the equation  \eqref{P1}, where $F_1$ is defined on a subset of $H$  and satisfies a Lipschitz condition (see {\rm (H3)} in Section \ref{section5}). We  prove  the existence and uniqueness  of local mild solutions. We also  show the space-time regularity and dependence on initial data of the solutions. 
Here, the local  solutions are constructed on nonrandom intervals. Note that previous results in  \cite{Brzezniak1,Ton4} show that local solutions are defined on random intervals.

For the study, we  use the semigroup approach. Let us explain this approach. Consider the Cauchy problem for a linear evolution equation
\begin{equation*}
\begin{cases}
\frac{dX}{dt}+AX=F(t), \quad\quad 0<t\leq T,\\
X(0)=X_0.
\end{cases}
\end{equation*}
Hille \cite{Hille} and Yosida \cite{Yosida} invented the semigroup $S(t)=e^{-tA}$ generated by a linear operator $(-A),$ which directly  provides a fundamental solution to the Cauchy problem 
$$X(t)=S(t)X_0+\int_0^t S(t-s)F(s)ds.$$
Similarly, a solution to the Cauchy problem for a nonlinear evolution equation
\begin{equation*}
\begin{cases}
\frac{dX}{dt}+AX=F(t,X), \quad\quad 0<t\leq T,\\
X(0)=X_0,
\end{cases}
\end{equation*}
can be obtained as a solution of an integral equation 
$$X(t)=S(t)X_0+\int_0^t S(t-s)F(s,X(s))ds.$$
By these  formulas, one can get  important information on solutions such as uniqueness, regularity, smoothing effect and so forth. Especially, for nonlinear problems one can derive Lipschitz continuity of solutions with respect to the initial values, even their Fr\'{e}chet differentiability.

The organization of this paper is as follows. Section \ref{section2} is  preliminary. We review some notions such as weighted H\"{o}lder continuous function spaces, analytical semigroups generated by sectorial operators, and  cylindrical Wiener processes.
 Section \ref{section5}  presents our main results. We assume  that the function $F_1$ is defined only on a subset of the space $H$,  $\mathcal D(F_1)= \mathcal D(A^\eta)$ for some $0<\eta<1$, and  satisfies a Lipschitz condition  (see {\rm (H3)}). We suppose further  that  the initial value  $\xi$ takes values in a smaller space, namely,  $\mathcal D(A^\beta)$.
Theorem \ref{Thm5} gives the existence and uniqueness of local solutions as well as its temporal and spacial regularity. Theorem  \ref{Thm7}  gives  the  regularity of the expectation of  local solutions. Theorem \ref{Thm6}   shows  the regular  dependence of solutions on initial data.  Finally, Section \ref{section4} gives some applications to stochastic PDEs.

\section{Preliminary}  \label{section2}
\subsection{Weighted H\"{o}lder continuous function spaces}
Let us review the notion of weighted H\"{o}lder continuous function spaces  $\mathcal F^{\beta, \sigma}((0,T]; H)$ 
 for two exponents $0<\sigma<\beta< 1.$ This kind of spaces is introduced  by Yagi \cite{yagi}.

 The space  $\mathcal F^{\beta, \sigma}((0,T]; H)$ consists of  $H$-valued continuous functions $F$ on $(0,T]$   with the following three properties:
\begin{itemize}
  \item [\rm (i)] 
   \begin{equation}  \label{P2}
  t^{1-\beta} F(t)  \text{  has a limit as   } t\to 0.
  \end{equation}
    \item [\rm (ii)] $F$ is H\"{o}lder continuous with  exponent $\sigma$ and   weight $s^{1-\beta+\sigma}$, i.e.
  \begin{equation} \label{P3}
\begin{aligned}
\sup_{0\leq s<t\leq T} & \frac{s^{1-\beta+\sigma}\|F(t)-F(s)\|}{(t-s)^\sigma}\\
&=\sup_{0\leq t\leq T}\sup_{0\leq s<t}\frac{s^{1-\beta+\sigma}\|F(t)-F(s)\|}{(t-s)^\sigma}<\infty.
\end{aligned}
\end{equation}
  \item [\rm (iii)] 
  \begin{equation} \label{P4}
   \lim_{t\to 0} w_{F}(t)=0, 
  \end{equation}
  where $w_{F}(t)=\sup_{0\leq s  <t}\frac{s^{1-\beta+\sigma}\|F(t)-F(s)\|}{(t-s)^\sigma}$.
\end{itemize}  
It is easily seen that   $\mathcal F^{\beta, \sigma}((0,T];E)$ is a Banach space with   norm
$$\|F\|_{\mathcal F^{\beta, \sigma}}=\sup_{0\leq t\leq T} t^{1-\beta} \|F(t)\|+ \sup_{0\leq s<t\leq T} \frac{s^{1-\beta+\sigma}\|F(t)-F(s)\|}{(t-s)^\sigma}.$$

 Clearly,  for  $F\in \mathcal F^{\beta, \sigma}((0,T];H),$
\begin{equation} \label{P5}  
\begin{cases}
\|F(t)\|\leq \|F\|_{\mathcal F^{\beta, \sigma}} t^{\beta-1}, \hspace{2cm} 0<t\leq T,\\
\begin{aligned}
 \|F(t)-F(s)\| & \leq w_{F}(t) (t-s)^{\sigma} s^{\beta-\sigma-1}\\
& \leq \|F\|_{\mathcal F^{\beta, \sigma}} (t-s)^{\sigma} s^{\beta-\sigma-1},  \hspace{1cm} 0<s\leq t\leq T.
\end{aligned}
\end{cases}
\end{equation}
\begin{remark}  \label{remark1}
\begin{itemize}
  \item [{\rm (a)}]
The space $\mathcal F^{\beta, \sigma}((0,T];H)$ is not  a trivial space.  The function $F$ defined by  $F(t) =t^{\beta-1} f(t), 0<t\leq T,$ belongs to this space, where  $f$ is any $E$-valued  function  such that
 $f\in \mathcal C^\sigma([0,T];E)$
  and $f(0)=0.$
\item [{\rm (b)}]  The space $\mathcal F^{\beta, \sigma}((a,b];E)$, $0\leq a<b<\infty$, is defined in a similar way. For more details, see \cite{yagi}.
\end{itemize}
\end{remark}

\subsection{Sectorial operators and analytical semigroups}
A densely defined, closed linear operator $A$ is said to be sectorial if it satisfies two conditions:
\begin{itemize}
  \item [(\rm{H1})] The spectrum $\sigma(A)$ of $A$ is  contained in an open sectorial domain $\Sigma_{\varpi}$: 
\begin{equation*} \label{H1} 
\sigma(A) \subset  \Sigma_{\varpi}=\{\lambda \in \mathbb C: |\arg \lambda|<\varpi\}, \quad \quad 0<\varpi<\frac{\pi}{2}.
       \end{equation*}
  \item [(\rm{H2})] The resolvent of $A$ satisfies the estimate  
\begin{equation*} \label{H2}
          \|(\lambda-A)^{-1}\| \leq \frac{M_{\varpi}}{|\lambda|}, \quad\quad\quad \quad   \lambda \notin \Sigma_{\varpi}
     \end{equation*}
     with some  constant $M_{\varpi}>0$ depending only on the angle $\varpi$.
     \end{itemize}

Let $A$ be a  sectorial operator. The fractional powers $A^\theta, -\infty<\theta<\infty,$ are then defined as follows. For each complex number $z$ such that ${\rm Re}\, z>0$, $A^{-z}$ is defined by using the Dunford integral in $\mathcal L(H)$:
$$A^{-z}=\frac{1}{2\pi i} \int_\gamma \lambda^{-z} (\lambda-A)^{-1} d\lambda.$$
Here, $\gamma=\gamma_-\cup \gamma_0 \cup \gamma_+$ is an integral contour surrounding the spectrum $\sigma(A)$ counterclockwise in the domain $\mathbb C\setminus (-\infty,0] \cap \mathbb C\setminus\sigma(A)$ of the complex plane:
\begin{align*}
\gamma_\pm \colon \lambda=\rho e^{\pm i \varpi}, \hspace{2cm} \frac{\|A^{-1}\|^{-1}}{2} \leq \rho<\infty,
\end{align*}
and 
\begin{align*}
\gamma_0 \colon \lambda=\frac{\|A^{-1}\|^{-1}}{2} e^{ i \varphi}, \hspace{1cm}  -\varpi \leq \varphi\leq \varpi.
\end{align*}
It is known that $A^{-z}$ is one to one for ${\rm Re}\, z>0$. The following definition is thus meaningful:
$$A^z=(A^{-z})^{-1} \hspace{1cm} \text{ for } {\rm Re}\, z>0.$$
In addition, it is natural to define $A^0=1$.  
In this way, for every real number $-\infty<\theta<\infty,$ $A^\theta$ has been defined. For more detail on fractional powers, see \cite{yagi}.

The following lemma shows useful estimates for fractional powers and the semigroup generated by a sectorial operator.
\begin{lemma}
Let {\rm (H1)} and {\rm (H2)}  be satisfied. Then,
\begin{itemize}
\item  [\rm (i)] $(-A)$ generates an analytical  semigroup $S(t)=e^{-tA}.$
\item  [\rm (ii)] For $0\leq \theta  <\infty,$
\begin{equation} \label{P6}
\|A^\theta S(t)\| \leq \iota_\theta t^{-\theta}, \hspace{2cm} 0<t<\infty,   
\end{equation}
 where $ \iota_\theta=\sup_{0\leq  t<\infty} t^\theta \|A^\theta S(t)\|<\infty$. In particular, there exists $\nu>0$ such that
\begin{equation} \label{P7}
\|S(t)\| \leq \iota_0 e^{-\nu t}\leq \iota_0, \hspace{2cm}  0\leq t<\infty.
\end{equation}
 \item  [\rm (iii)]  For  $0<\theta\leq 1,$ 
\begin{equation} \label{P8}
\|[S(t)-I]A^{-\theta}\|\leq \frac{\iota_{1-\theta}}{\theta} t^\theta, \hspace{2cm}  0\leq t<\infty.
\end{equation}
\end{itemize}
\end{lemma}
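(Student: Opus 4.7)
For (i) together with \eqref{P6}--\eqref{P7}, I would follow the classical Dunford-integral construction of the semigroup. Pick $\varpi < \varpi' < \pi/2$ and let $\Gamma$ be a contour around $\overline{\Sigma_\varpi}$ shaped like $\gamma$ but with angle $\varpi'$. Set
\begin{equation*}
S(t) = \frac{1}{2\pi i}\int_\Gamma e^{-t\lambda}(\lambda - A)^{-1}\,d\lambda, \quad t > 0,
\end{equation*}
and $S(0) = I$. By (H2) the integrand decays like $|\lambda|^{-1} e^{-t|\lambda|\cos\varpi'}$ on the unbounded arms, so the integral converges absolutely in $\mathcal{L}(H)$ and defines an analytic function of $t$. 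A contour-doubling argument based on the resolvent identity yields the semigroup property $S(t_1)S(t_2) = S(t_1+t_2)$, and strong continuity at zero follows first on $\mathcal{D}(A)$ and then by density on $H$. To obtain \eqref{P6}, insert $A^\theta$ under the integral and rescale via $\mu = t\lambda$: this extracts the factor $t^{-\theta}$ and leaves a $t$-independent integral equal to the finite constant $\iota_\theta$. For \eqref{P7}, shift $\Gamma$ leftward so its vertex lies at some $-\nu < 0$ still in $\rho(A)$; on the shifted contour $|e^{-t\lambda}| \leq e^{-\nu t}$, which yields the exponential bound.

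For (iii), the key identity is
\begin{equation*}
[S(t) - I] A^{-\theta} = \int_0^t \frac{d}{ds}\left[S(s)A^{-\theta}\right]\,ds = -\int_0^t A^{1-\theta} S(s)\,ds,
\end{equation*}
which holds because $A^{-\theta}$ is bounded and $\frac{d}{ds}S(s) = -A S(s)$ on $\mathcal{D}(A)$. Since $0 < \theta \leq 1$, applying \eqref{P6} with exponent $1-\theta \in [0,1)$ gives $\|A^{1-\theta} S(s)\| \leq \iota_{1-\theta}\, s^{\theta - 1}$, which is integrable near $s = 0$ because $\theta > 0$. Taking norms under the integral and computing $\int_0^t s^{\theta-1}\,ds = t^\theta/\theta$ then produces exactly the bound in \eqref{P8}.

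The main technical point is justifying the representation $A^\theta S(t) = \frac{1}{2\pi i}\int_\Gamma \lambda^\theta e^{-t\lambda}(\lambda - A)^{-1}\,d\lambda$ for all $\theta \geq 0$, in particular when $\theta \geq 1$ and $A^\theta$ is unbounded. One handles this by using $A(\lambda - A)^{-1} = \lambda(\lambda - A)^{-1} - I$ to verify inductively that $S(t)H \subset \mathcal{D}(A^n)$ for every $n \in \mathbb{N}$ and that the formula holds for integer $\theta$; the fractional case then follows from the Dunford definition of $A^{-\theta}$ together with a Fubini exchange of the two contour integrals. With this representation in place, the remaining estimates reduce to elementary bounds on $|\lambda^\theta e^{-t\lambda}|$ along the rescaled contour.
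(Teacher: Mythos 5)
Your argument is the standard Dunford--Riesz construction, which is exactly what the cited source (Yagi's book) does; the paper itself offers no proof of this lemma beyond that citation, so there is nothing different to compare against. Parts (i), (ii) for general $\theta$ (via the rescaling $\mu=t\lambda$ and the inductive justification of $A^\theta S(t)=\frac{1}{2\pi i}\int_\Gamma\lambda^\theta e^{-t\lambda}(\lambda-A)^{-1}\,d\lambda$), and (iii) (via $[S(t)-I]A^{-\theta}=-\int_0^t A^{1-\theta}S(s)\,ds$ and the integrability of $s^{\theta-1}$) are all correct and yield precisely the constants in \eqref{P6} and \eqref{P8}. The one step that is wrong as written is the derivation of \eqref{P7}: if you shift the contour so that its vertex sits at $\mathrm{Re}\,\lambda=-\nu<0$, then near the vertex $|e^{-t\lambda}|=e^{-t\,\mathrm{Re}\,\lambda}\le e^{+\nu t}$, which grows rather than decays. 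What you need is the opposite: because $0\in\rho(A)$ and $\varpi<\pi/2$, the spectrum satisfies $\sigma(A)\subset\{\mathrm{Re}\,\lambda\ge 2\nu\}$ for some $\nu>0$, so the contour can be kept entirely in $\{\mathrm{Re}\,\lambda\ge\nu\}$; there $|e^{-t\lambda}|\le e^{-\nu t}e^{-t(\mathrm{Re}\,\lambda-\nu)}$, and the residual factor still supplies the decay needed to make the integral converge (equivalently, apply the uniform bound $\sup_t\|e^{-tB}\|<\infty$ to the sectorial operator $B=A-\nu$). With that sign corrected, the proof is complete; note also that the exponential decay is what makes $\iota_\theta=\sup_{0\le t<\infty}t^\theta\|A^\theta S(t)\|$ finite over the whole half-line rather than only on bounded intervals.
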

For the proof, see \cite{yagi}.

To end this subsection, let us recall a result  presented in  \cite{yagi0,yagi}.
\begin{theorem}\label{Thm2}
Let {\rm (H1)} and {\rm (H2)} be satisfied.  
Let 
$$x\in \mathcal D(A^\beta) \, \text{ and } \,F\in \mathcal F^{\beta,\sigma}((0,T];H)$$
for some  $ 0<\sigma<\beta < 1.$ 
Set
$$I(t)=S(t) x+\int_0^t S(t-s)F(s)ds,\hspace{1cm} 0\leq t\leq T,$$
where $S(t)$ is the analytical semigroup generated by $(-A)$.
Then, $I$ possesses the  properties:
$$A^\beta I \in \mathcal C([0,T];H), $$ 
$$\frac{ dI}{dt}, AI \in \mathcal F^{\beta,\sigma}((0,T];H)$$
with the estimate
$$\Big|\Big|\frac{dI}{dt}\Big|\Big|_{\mathcal F^{\beta,\sigma}}+\|A^\beta I\|_{\mathcal C}+\|AI\|_{\mathcal F^{\beta,\sigma}}\leq C[\|A^\beta x\|+\|F\|_{\mathcal F^{\beta,\sigma}}],$$
where $C>0$ is some constant depending only on $\beta$ and $\sigma$.
\end{theorem}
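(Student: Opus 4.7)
The plan is to decompose $I(t) = I_1(t) + I_2(t)$, with $I_1(t) = S(t)x$ and $I_2(t) = \int_0^t S(t-r) F(r)\,dr$, and to analyze each piece separately. Once $AI$ has been shown to lie in $\mathcal F^{\beta,\sigma}$, the identity $dI/dt = F - AI$ (valid for $t>0$ by differentiation of the variation-of-constants formula) immediately places $dI/dt$ in the same space, since $F$ is there by hypothesis. Hence the essential tasks reduce to (a) continuity of $A^\beta I$ on $[0,T]$, and (b) membership of $AI$ in $\mathcal F^{\beta,\sigma}((0,T];H)$, with norms controlled by $\|A^\beta x\| + \|F\|_{\mathcal F^{\beta,\sigma}}$.

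The semigroup piece $I_1$ is routine. Since $x \in \mathcal D(A^\beta)$, the commutation $A^\beta I_1(t) = S(t) A^\beta x$, together with the strong continuity implicit in \eqref{P7}, handles (a) for $I_1$ with bound $\iota_0 \|A^\beta x\|$. For (b) I would write $AI_1(t) = A^{1-\beta} S(t)\,A^\beta x$ and use \eqref{P6} for the pointwise bound $t^{1-\beta} \|AI_1(t)\| \leq \iota_{1-\beta} \|A^\beta x\|$; the Hölder difference $AI_1(t) - AI_1(s) = [S(t-s) - I] A^{-\sigma} \cdot A^{1-\beta+\sigma} S(s)\,A^\beta x$ is then controlled by combining \eqref{P8} (producing the factor $(t-s)^\sigma$) with \eqref{P6} (producing $s^{-(1-\beta+\sigma)}$).

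The convolution $I_2$ is analyzed via the classical add-and-subtract trick, using $\int_0^t AS(t-r)\,dr = I - S(t)$, yielding
\begin{equation*}
AI_2(t) = [I - S(t)] F(t) + \int_0^t AS(t-r)\bigl[F(r) - F(t)\bigr]\,dr.
\end{equation*}
The first term is bounded by \eqref{P7} and the pointwise estimate from \eqref{P5}. For the second, \eqref{P6} together with the weighted Hölder bound $\|F(r) - F(t)\| \leq \|F\|_{\mathcal F^{\beta,\sigma}}(t-r)^\sigma r^{\beta-\sigma-1}$ reduces the estimate to the Beta integral $\int_0^t (t-r)^{\sigma-1} r^{\beta-\sigma-1}\,dr = B(\sigma,\beta-\sigma) t^{\beta-1}$, finite since $0 < \sigma < \beta < 1$. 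Continuity of $A^\beta I_2$ is then handled analogously, the relevant integrability being $\int_0^t (t-r)^{-\beta} r^{\beta-1}\,dr = B(1-\beta,\beta) < \infty$.

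The main obstacle is the weighted Hölder estimate of $AI_2$. Using the semigroup identity $I_2(t) = S(t-s) I_2(s) + \int_s^t S(t-r) F(r)\,dr$, one derives the four-term decomposition
\begin{equation*}
\begin{aligned}
AI_2(t) - AI_2(s) &= [I - S(t-s)][F(t) - F(s)] + [S(s) - S(t)] F(s) \\
&\quad + \int_s^t AS(t-r)[F(r) - F(t)]\,dr \\
&\quad + \int_0^s \bigl[AS(t-r) - AS(s-r)\bigr][F(r) - F(s)]\,dr,
\end{aligned}
\end{equation*}
and each piece must be bounded by a constant multiple of $\|F\|_{\mathcal F^{\beta,\sigma}} (t-s)^\sigma s^{-(1-\beta+\sigma)}$. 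The first three are handled by direct use of \eqref{P5}, \eqref{P7}, \eqref{P8}, and a Beta-type integral. The fourth is the genuinely delicate one: writing $AS(t-r) - AS(s-r) = [S(t-s) - I] AS(s-r)$, \eqref{P8} yields the bound $C(t-s)^\sigma (s-r)^{-1-\sigma}$, but the resulting integral $\int_0^s (s-r)^{-1} r^{\beta-\sigma-1}\,dr$ is logarithmically divergent at $r=s$. The standard remedy is to split the interval at $r = s - (t-s)$ and, on the near-diagonal piece, use instead the cruder bound $\|AS(t-r)\| + \|AS(s-r)\| \leq C[(t-r)^{-1} + (s-r)^{-1}]$; a careful Beta-function calculation then closes the estimate. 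Property (iii) of $\mathcal F^{\beta,\sigma}$, namely $w_{AI}(t) \to 0$ as $t \to 0$, follows because the prefactor $\|F\|_{\mathcal F^{\beta,\sigma}}$ in these bounds can be replaced by $w_F(t)$, which vanishes by \eqref{P4}.
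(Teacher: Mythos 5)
The paper itself does not prove Theorem \ref{Thm2} --- it is imported from the cited works of Yagi --- so there is no internal proof to compare against. Your outline is the standard maximal--regularity argument from that source: the split $I=I_1+I_2$, the identity $AI_2(t)=[I-S(t)]F(t)+\int_0^tAS(t-r)[F(r)-F(t)]\,dr$, the four-term decomposition of $AI_2(t)-AI_2(s)$ (which is algebraically correct --- I checked it), and the reduction $dI/dt=F-AI$. The estimates you give for $I_1$ and for the first three difference terms do go through.

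Two points, however, are not actually closed. First, the fourth term. Your split at $r=s-(t-s)$ does fix the near-diagonal piece, but on the far piece $[0,s-(t-s)]$, applying \eqref{P8} with exponent exactly $\sigma$ leaves $(t-s)^{\sigma}\int_0^{s-(t-s)}(s-r)^{-1}r^{\beta-\sigma-1}\,dr$, which behaves like $(t-s)^{\sigma}s^{\beta-\sigma-1}\log\frac{s}{t-s}$; no Beta-function computation removes that logarithm. The standard repair is to apply \eqref{P8} there with an intermediate exponent $\sigma'\in(\sigma,1)$: then $\int_{s/2}^{s-(t-s)}(s-r)^{\sigma-\sigma'-1}\,dr\le(\sigma'-\sigma)^{-1}(t-s)^{\sigma-\sigma'}$, and the powers recombine to $(t-s)^{\sigma}s^{\beta-\sigma-1}$ with no log. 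Second, continuity of $A^{\beta}I_2$ at $t=0$ does not follow from $\int_0^t(t-r)^{-\beta}r^{\beta-1}\,dr=B(1-\beta,\beta)<\infty$; that bound only gives $\sup_t\|A^{\beta}I_2(t)\|<\infty$, not $A^{\beta}I_2(t)\to0$. One must use the existence of $z=\lim_{t\to0}t^{1-\beta}F(t)$, write $A^{\beta}I_2(t)=\int_0^tA^{\beta}S(t-r)[F(r)-F(t)]\,dr+[I-S(t)]A^{\beta-1}F(t)$, and dispose of the second term by approximating $z$ from the dense set $\mathcal D(A^{\beta})$ --- precisely the computation the paper carries out in Step 1 of the proof of Theorem \ref{Thm5}. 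A similar density argument is also needed to get property \eqref{P4} (the vanishing of the weighted H\"older modulus) for the $AI_1$ contribution, which your remark about replacing $\|F\|_{\mathcal F^{\beta,\sigma}}$ by $w_F(t)$ does not cover.
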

\subsection{Cylindrical Wiener process}
Let us review a central notion to the theory of stochastic evolution equations, namely,  cylindrical Wiener processes on the Hilbert space $U$. First, we recall the definition of  $Q$\,-\,Wiener processes on Hilbert spaces (see  \cite{CurtainFalb}).

\begin{definition} \label{definition1}
An $U$-valued stochastic process $W$ defined on a filtered  probability space  $(\Omega, \mathcal F,\mathcal F_t,\mathbb P)$ is a $Q$\,-\,Wiener process if 
\begin{itemize}
  \item $W(0)=0$ \hspace{1cm} a.s.
  \item $W$ has continuous sample paths
  \item $W$ has independent increments
  \item The law of $W(t)-W(s), 0<s\leq t,$ is a Gaussian measure on $U$ with mean $0$ and covariance $(t-s)Q,$ where $Q$ is a symmetric nonnegative nuclear operator in $\mathcal L(U)$
\end{itemize}
\end{definition}

\begin{remark}
\begin{itemize}
  \item [(i)] When $U$ is the real line $\mathbb R$, the operator $Q$ is just a positive number $q$. The  $Q$\,-\,Wiener process is then a  Brownian motion on $\mathbb R$. When $U=\mathbb R^n (n=2,3,4,\dots)$, $Q$ is an $n\times n$ positive definite matrix. In this case, the $Q$\,-\,Wiener process is a Brownian motion in $\mathbb R^n$.
  \item [(ii)] The operator  $Q$ is not only a bounded linear operator but also a nuclear operator, i.e. its trace is finite:
$${\rm Tr} \,(Q)=\sum_{i=1}^\infty \langle Qe_i,e_i \rangle <\infty,$$
here $\{e_i\}_{i=1}^\infty$ is a complete orthonormal basis in $U$.
 \end{itemize}
\end{remark}

 Let us now fix a larger Hilbert space $U_1$ such that $U$ is embedded continuously into $U_1$ and the embedding 
$J\colon U\to U_1$ is Hilbert-Schmidt (i.e. $\sum_{i=1}^\infty \|Je_i\|_{U_1}^2<\infty).$ For example (see  \cite{Hairer}), we take $U_1$ to be the closure of $U$ under the norm
$$\|h\|_{U_1}=\left[\sum_{n=1}^\infty \frac{\langle h,e_n\rangle_U^2}{n^2}\right]^{\frac{1}{2}}.$$
For every $u\in U_1$, we have
\begin{align*}
\langle JJ^*e_m, u\rangle_{U_1}
&=\langle J^*e_m, J^*u\rangle_U\\
&=\langle\sum_{k=1}^\infty e_k \langle J^*e_m,e_k\rangle_U, \sum_{k=1}^\infty e_k \langle J^*u,e_k\rangle_U\rangle_U\\
&= \sum_{k=1}^\infty \langle J^*e_m,e_k\rangle_U  \langle J^*u,e_k\rangle_U
=\sum_{k=1}^\infty \langle e_m,Je_k\rangle_{U_1}  \langle u,Je_k\rangle_{U_1}\\
&=\sum_{k=1}^\infty \langle e_m,e_k\rangle_{U_1}  \langle u,e_k\rangle_{U_1}
=\|e_m\|_{U_1}^2  \langle u,e_m\rangle_{U_1}\\
&= \frac{1}{m^2} \langle u,e_m\rangle_{U_1}, \hspace{2cm}    m=1,2,3,\dots
\end{align*}
Therefore, $JJ^*e_m=\frac{1}{m^2} e_m $ for  $m=1,2,3,\dots$ As a consequence,
$${\rm Tr}\, (JJ^*)=\sum_{m=1}^\infty \langle JJ^*e_m,e_m\rangle=\sum_{m=1}^\infty \frac{1}{m^2}<\infty.$$
Thus, $JJ^*$ is a nuclear operator.

Based on  the operator $JJ^*$ , one can define a cylindrical Wiener process.  The following definition is taken from  \cite{prato,Hairer}.
\begin{definition} 
The $U_1$-valued Wiener process in Definition \ref{definition1} with covariance $Q=JJ^*$  is called a cylindrical Wiener process on $U$.
\end{definition}

The $H$-valued stochastic integrals against a cylindrical Wiener process on $U$ is then constructed in the same way as what is usually done in finite dimensions. In  \cite{prato},  the stochastic integrals $\int_0^T \Phi(s)dW(s)$ are constructed for  integrand   $\Phi$ in   $\mathcal N^2(0,T;L_2(U;H))$, the space of all $L_2(U;H)$-valued predictable stochastic processes $\Phi$ on $[0,T]$ such that 
$$\mathbb E \int_0^T \|\Phi(s)\|_{L_2(U;H)}^2 ds<\infty.$$
It is known that the class  $\mathcal N^2(0,T;L_2(U;H))$  is independent of the space $U_1$ chosen. Furthermore, stochastic integrals can be extended to $L_2(U;H)$-valued predictable stochastic processes $\Phi$ satisfying a weaker condition:
$$\mathbb P\left\{ \int_0^T \|\Phi(s)\|_{L_2(U;H)}^2 ds<\infty\right\}=1.$$
The set of all such processes is denoted by $\mathcal N(0,T;L_2(U;H)).$ The readers can find  properties of stochastic integrals against cylindrical Wiener processes in \cite{prato}. Those are similar to ones of the usual stochastic integrals. 

The following known result is used very often.
\begin{theorem}
Let $G\in \mathcal N^2(0,T;L_2(U;H))$.  
Let $B$ be a closed linear operator on $H$ such that
 $$          \mathbb E\int_0^T ||BG(s)||_{L_2(U;H)}^2 ds<\infty. $$ 
          Then,
$$B\int_0^T G(t)dW(t)=\int_0^T BG(t)dW(t) \hspace{1cm} a.s.$$
\end{theorem}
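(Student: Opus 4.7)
The plan is to follow the standard recipe for transporting an identity from simple integrands to general ones via an isometry argument, with the closedness of $B$ used to pass to the limit. First I would reduce to the case of elementary predictable processes, where the identity is essentially automatic by linearity: if $G(t,\omega)=\sum_{k=0}^{n-1} G_k(\omega)\mathbf 1_{(t_k,t_{k+1}]}(t)$ with each $G_k$ an $\mathcal F_{t_k}$-measurable $L_2(U;H)$-valued random variable taking values in operators whose range lies in $\mathcal D(B)$, then
\[
B\int_0^T G(t)\,dW(t)=B\sum_{k=0}^{n-1}G_k\bigl[W(t_{k+1})-W(t_k)\bigr]=\sum_{k=0}^{n-1}(BG_k)\bigl[W(t_{k+1})-W(t_k)\bigr]=\int_0^T BG(t)\,dW(t),
\]
simply because $B$ commutes with finite linear combinations and the increments are deterministic outputs of $B$ applied to each $G_k$.

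The delicate point is the approximation step: we need a sequence of simple processes $G_n$ such that \emph{simultaneously} $G_n\to G$ and $BG_n\to BG$ in $\mathcal N^2(0,T;L_2(U;H))$. To arrange this, I would introduce the graph space $H_B$ of $B$, namely $\mathcal D(B)$ equipped with the Hilbert norm $\|x\|_{B}^{2}=\|x\|^{2}+\|Bx\|^{2}$; since $B$ is closed, $H_B$ is complete. The hypothesis $\mathbb E\int_0^T\|BG(s)\|_{L_2(U;H)}^2\,ds<\infty$ together with $G\in\mathcal N^2(0,T;L_2(U;H))$ exactly says that $G$, viewed pointwise as an $L_2(U;H_B)$-valued predictable process, belongs to $\mathcal N^2(0,T;L_2(U;H_B))$. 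Using the density of elementary processes in that space (the same density lemma proved in \cite{prato} applied to the Hilbert space $H_B$ in place of $H$), choose elementary $G_n$ converging to $G$ in $\mathcal N^2(0,T;L_2(U;H_B))$. This single approximation yields both $G_n\to G$ and $BG_n\to BG$ in $\mathcal N^2(0,T;L_2(U;H))$.

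Finally, I would put the two pieces together. By the It\^o isometry for cylindrical Wiener integrals,
\[
\int_0^T G_n(t)\,dW(t)\to\int_0^T G(t)\,dW(t),\qquad \int_0^T BG_n(t)\,dW(t)\to\int_0^T BG(t)\,dW(t),
\]
with convergence in $L^2(\Omega;H)$, hence a.s. along a subsequence. By the elementary-process case, $B\int_0^T G_n\,dW=\int_0^T BG_n\,dW$ for every $n$. Passing $n\to\infty$ along the chosen subsequence and invoking the closedness of $B$ conclude that $\int_0^T G\,dW$ lies in $\mathcal D(B)$ and $B\int_0^T G\,dW=\int_0^T BG\,dW$ almost surely.

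The principal obstacle is the simultaneous approximation in both norms; handling it ad hoc (for instance, approximating $G$ and $BG$ separately) would not produce a single elementary $G_n$ whose image under $B$ equals the approximant of $BG$. The graph-space device circumvents this cleanly by shifting the density argument to $L_2(U;H_B)$, where the closedness of $B$ is built into the norm. Everything else---linearity for simple processes, the It\^o isometry, and extraction of an a.s.\ convergent subsequence---is routine.
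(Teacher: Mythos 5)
Your argument is correct and complete in outline. Note, however, that the paper does not prove this statement at all: it is quoted as a known result with the proof deferred to Da Prato--Zabczyk \cite{prato}. The proof given there is a duality argument rather than an approximation argument: for $h\in\mathcal D(B^*)$ one writes $\langle \int_0^T G\,dW,\,B^*h\rangle=\int_0^T (G(s)^*B^*h)\,dW(s)=\int_0^T ((BG(s))^*h)\,dW(s)=\langle\int_0^T BG\,dW,\,h\rangle$, and then uses that a closed densely defined operator satisfies $B^{**}=B$ to conclude membership in $\mathcal D(B)$ and the identity. Your graph-norm route reaches the same conclusion by a density-and-closedness argument; its merits are that it does not need $B$ to be densely defined (only closed), and that it localizes the only genuinely delicate point --- the need for a \emph{single} sequence of elementary processes approximating $G$ and $BG$ simultaneously --- which you resolve correctly by running the standard density lemma in $\mathcal N^2(0,T;L_2(U;H_B))$. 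One small step worth making explicit in your elementary-process case: since $W$ is cylindrical, $\int_0^T G_n\,dW$ is not a finite linear combination but a series $\sum_j G_{n,k}e_j\,\Delta\beta_j$, so interchanging $B$ with it is not pure ``linearity''; it is justified because that series converges in $L^2(\Omega;H_B)$ and $B\colon H_B\to H$ is bounded for the graph norm. In fact, once you view $G$ as an element of $\mathcal N^2(0,T;L_2(U;H_B))$ you can finish even more directly: the $H_B$-valued integral $\int_0^T G\,dW$ exists, and applying the two continuous coordinate maps $H_B\to H$ (identity and $B$) commutes with the stochastic integral, which yields the claim without extracting an a.s.\ convergent subsequence.
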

For the proof, see  \cite{prato}.

\subsection{Mild solutions}
Let us introduce  a definition of  mild solutions to \eqref{P1} (see \cite{prato,Ichikawa}).
\begin{definition}\label{Def1}
Let $F_2$ and $G$ be $H$-valued and $L_2(U;H)$-valued functions satisfying the conditions:
$$\int_0^t \|S(t-s)F_2(s)\| ds<\infty, \hspace{2cm} 0\leq t\leq T, $$ 
and 
$$   \int_0^t \|S(t-s)G(s)\|_{L_2(U;H)}^2 ds<\infty, \hspace{2cm} 0\leq t\leq T.  $$
A predictable $H$-valued process $X$ on $ [0,T]$ is called a  mild solution  of \eqref{P1} if 
$$  
\int_0^T \|S(t-s)F_1(X(s))\| ds<\infty   \hspace{2cm} \text{ a.s., } 
$$  
and 
\begin{align*}
 \label{DefinitionGlobalMildSolutions}X(t)=&S(t)\xi +\int_0^tS(t-s) [F_1(X(s))+F_2(s)]ds\\
&+ \int_0^t S(t-s) G(s)dW(s) \hspace{3cm} \text{a.s., } 0< t\leq T.\notag
\end{align*}
\end{definition}

In order to study the H\"{o}lder continuity of solutions,  the Kolmogorov test is useful.
\begin{theorem}  \label{Thm1}
Let $\zeta$ be  an $H$-valued stochastic process on $[0,T].$ Assume that  for some  $c>0$ and $ \epsilon_i>0 \, (i=1,2),$ 
\begin{equation*}
\mathbb E\|\zeta(t)-\zeta(s)\|^{\epsilon_1}\leq c |t-s|^{1+\epsilon_2}, \hspace{1cm}  0\leq s,t\leq T.
\end{equation*}
Then, $\zeta$ has  a version whose $\mathbb P$-almost all trajectories are H\"{o}lder continuous functions with an  arbitrarily smaller  exponent than $\frac{\epsilon_2}{\epsilon_1}$.
\end{theorem}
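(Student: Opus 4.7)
The plan is to prove this by the classical Kolmogorov dyadic chaining argument. First, for each $n\in\mathbb{N}$ I would consider the dyadic partition $D_n=\{kT/2^n : k=0,1,\dots,2^n\}$ of $[0,T]$ and apply Markov's inequality to the given moment bound: for any fixed $\alpha\in(0,\epsilon_2/\epsilon_1)$,
$$\mathbb{P}\bigl(\|\zeta(kT/2^n)-\zeta((k-1)T/2^n)\|\geq 2^{-\alpha n}\bigr)\leq cT^{1+\epsilon_2}\,2^{-n(1+\epsilon_2-\alpha\epsilon_1)}.$$
Taking a union over $k=1,\dots,2^n$, the event $A_n$ that this bound fails for some $k$ at level $n$ satisfies $\mathbb{P}(A_n)\leq cT^{1+\epsilon_2}2^{-n(\epsilon_2-\alpha\epsilon_1)}$, which is summable because $\alpha\epsilon_1<\epsilon_2$. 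By Borel–Cantelli, there exists $\Omega_0\subset\Omega$ with $\mathbb{P}(\Omega_0)=1$ such that for every $\omega\in\Omega_0$ there is $n_0(\omega)$ with $\omega\notin A_n$ for all $n\geq n_0(\omega)$.

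The crucial step is then the chaining argument on $D=\bigcup_n D_n$. For $\omega\in\Omega_0$ and $s,t\in D$ with $0<t-s<2^{-n_0(\omega)}T$, pick $N\geq n_0(\omega)$ with $2^{-(N+1)}T\leq t-s<2^{-N}T$ and expand both $s$ and $t$ as finite telescoping sums along successive dyadic refinements starting from the same point of $D_N$. Summing the level-by-level increment bounds $2^{-\alpha n}$ for $n\geq N$ (a geometric series) yields
$$\|\zeta(t,\omega)-\zeta(s,\omega)\|\leq K(\omega)\,|t-s|^\alpha$$
for a finite $\omega$-dependent constant $K(\omega)$. Thus $\zeta|_D$ is uniformly $\alpha$-Hölder continuous on $D$ for every $\omega\in\Omega_0$.

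Finally, since $D$ is dense in $[0,T]$ and $\zeta(\cdot,\omega)|_D$ is uniformly continuous for $\omega\in\Omega_0$, I define the version $\tilde{\zeta}(t,\omega)=\lim_{D\ni s\to t}\zeta(s,\omega)$ on $\Omega_0$, and set $\tilde{\zeta}(\cdot,\omega)\equiv 0$ otherwise. By construction $\tilde{\zeta}(\cdot,\omega)$ is $\alpha$-Hölder continuous on $[0,T]$ almost surely. To check that $\tilde{\zeta}$ is a modification of $\zeta$, fix $t\in[0,T]$, pick any sequence $s_k\in D$ with $s_k\to t$, and apply Fatou's lemma together with the hypothesis to conclude $\mathbb{E}\|\tilde{\zeta}(t)-\zeta(t)\|^{\epsilon_1}\leq \liminf_k c|t-s_k|^{1+\epsilon_2}=0$, so $\tilde{\zeta}(t)=\zeta(t)$ a.s.

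The main obstacle is the chaining step: one must organize the dyadic telescoping so that the sum of the per-level bounds $\sum_{n\geq N}2^{-\alpha n}\lesssim 2^{-\alpha N}\lesssim |t-s|^\alpha$ is controlled uniformly in $s,t\in D$ small enough, and so that the constant $K(\omega)$ depends only on $n_0(\omega)$ and $\alpha$. Since $\alpha$ can be chosen arbitrarily close to (but strictly less than) $\epsilon_2/\epsilon_1$, this yields precisely the conclusion stated in the theorem.
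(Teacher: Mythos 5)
Your argument is correct: it is the classical Kolmogorov--Chentsov proof via Markov's inequality on dyadic increments, Borel--Cantelli, dyadic chaining, and extension by uniform continuity from the dense set $D$, with the Fatou step correctly verifying that $\tilde\zeta$ is a modification. The paper gives no proof of this statement and simply cites Da Prato--Zabczyk, where essentially the same dyadic argument is carried out, so your route coincides with the intended one.
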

For the proof, see e.g.,  \cite{prato}.

\section{Main results} \label{section5}
In this section, we  prove the  existence and  uniqueness  of local mild solutions to  \eqref{P1} and show their regularity (Subsection \ref{sub3.1}). We then  show regular dependence of solutions on initial data (Subsection \ref{sub3.2}).

Let fix constants $\eta, \beta, \sigma $ such that
\begin{equation*}
\begin{cases}
0<\eta<\frac{1}{2}, \\
 \max\{0, 2\eta-\frac{1}{2}\}<\beta<\eta, \\
 0<\sigma <\beta.
\end{cases}
\end{equation*}
Assume  that
\begin{itemize}
\item [{\rm (H3)}] $F_1\colon \mathcal D(A^\eta)\subset H \to H$  satisfies a Lipschitz condition of the form
     \begin{equation*} 
        \|F_1(x)-F_1(y)\|\leq c_{F_1}  \|A^\eta(x-y)\| \hspace{1cm}  \text{ a.s., }  x,y\in \mathcal D(A^\eta), 
      \end{equation*}
where $c_{F_1}$ is some positive constant.
\item [{\rm (H4)}] $F_2 \in \mathcal F^{\beta,\sigma} ((0,T];H). $
\item [{\rm (H5)}] $ G\in B([0,T];L_2(U;H)).$
\end{itemize}
Here, $B([0,T];L_2(U;H))$ is the space of uniformly bounded $L_2(U;H)$-valued functions on $[0,T]$  with the supremum norm:
$$\|G\|_{B([0,T];L_2(U;H))}=\sup_{0\leq t\leq T} \|G(t)\|_{L_2(U;H)}.$$

\begin{lemma}\label{Thm4}
Let {\rm (H1)}, {\rm (H2)}, {\rm (H3)}, {\rm (H4)} and {\rm (H5)}   be satisfied. Then,
\begin{itemize}
\item [{\rm (i)}] For  $0\leq \theta<1,$ 
$$\int_0^t\|A^\theta S(t-s)F_2(s)\|ds\leq \iota_\theta \|F_2\|_{\mathcal F^{\beta,\sigma}}  B(\beta,1-\theta) t^{\beta-\theta}, \hspace{0.5cm} 0<t \leq T,$$
where $B(\cdot,\cdot)$ denotes the beta function. 
As a consequence, 
$$A^\theta \int_0^t  S(t-s)F_2(s)ds=\int_0^t A^\theta S(t-s)F_2(s)ds$$
and $A^\theta \int_0^\cdot  S(\cdot-s)F_2(s)ds$ is continuous on $(0,T]$. Furthermore, if $\theta<\beta,$ then $A^\theta \int_0^\cdot  S(\cdot-s)F_2(s)ds$ is also continuous at  $t=0$.
  \item [{\rm (ii)}] For  $0 \leq \theta<\frac{1}{2},$ 
$$\int_0^t\|A^\theta S(t-s)G(s)\|_{L_2(U;H)}^2 ds\leq \frac{ \iota_\theta^2 t^{1-2\theta}\|G\|_{B([0,T]; L_2(U;H))}^2}{1-2\theta}, \quad 0\leq t\leq T.$$ 
As a consequence, the stochastic convolution $W_G$  defined by 
 $$W_G(t)=\int_0^t  S(t-s)G(s)dW(s), \hspace{1cm} 0\leq t\leq T$$ 
satisfies
$$A^\theta W_G(t)=\int_0^t A^\theta S(t-s)G(s)dW(s)   \hspace{1cm} \text{a.s.}, 0\leq t\leq T.$$
Furthermore,  $A^\theta W_G$  is continuous on $[0,T]$.
\item [{\rm (iii)}]  For any $ 0< \gamma <\frac{1+2\beta}{4}-\eta,$   $W_G$ has the regularity:
$$A^\eta W_G\in \mathcal C^\gamma ((0,T];H) \hspace{1cm} \text{a.s.}$$
 
\end{itemize}
\end{lemma}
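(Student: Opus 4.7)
The plan is to tackle the three items in order, all resting on the same ingredients: the sectorial estimates \eqref{P6}--\eqref{P8}, the weighted bound \eqref{P5} for $F_2$, the It\^o isometry in its $L_2(U;H)$ form, and the standard commutation of closed operators with Bochner and stochastic integrals.

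For (i), I would substitute \eqref{P6} and the pointwise bound from \eqref{P5} into the integrand, dominating it by $\iota_\theta \|F_2\|_{\mathcal F^{\beta,\sigma}} (t-s)^{-\theta} s^{\beta-1}$. The change of variable $s=tr$ identifies the remaining integral with $B(\beta,1-\theta)\,t^{\beta-\theta}$, which is finite because $\beta>0$ and $\theta<1$. The identity for $A^\theta\int_0^\cdot$ then follows from closedness of $A^\theta$ combined with the Bochner integrability just obtained, and continuity on $(0,T]$ is a dominated-convergence exercise using the uniform continuity of $\tau\mapsto A^\theta S(\tau)$ on compact subsets of $(0,\infty)$ together with the majorant $(t-s)^{-\theta} s^{\beta-1}$; when $\theta<\beta$ the overall prefactor $t^{\beta-\theta}\to 0$ and continuity at $t=0$ follows.

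For (ii), applying the same pointwise estimate I get $\|A^\theta S(t-s) G(s)\|_{L_2(U;H)} \leq \iota_\theta(t-s)^{-\theta}\|G\|_{B([0,T];L_2(U;H))}$, and an integration in $s$ using $2\theta<1$ produces the stated bound. This places $A^\theta S(t-\cdot) G(\cdot)$ in $\mathcal N^2(0,t; L_2(U;H))$, so the commutation theorem for closed operators and stochastic integrals stated in the preliminaries gives $A^\theta W_G(t)=\int_0^t A^\theta S(t-s) G(s)\,dW(s)$. Continuity of $A^\theta W_G$ on $[0,T]$ can be obtained from the mean-square continuity estimate developed in (iii) below, combined with the Gaussianity of the integrand and Kolmogorov's test; alternatively, the Da Prato--Kwapie\'n--Zabczyk factorisation gives a direct route.

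For (iii), the idea is to derive a mean-square H\"older estimate for $A^\eta W_G$ and bootstrap it via Gaussian moment equivalence. For $0<s<t\leq T$, I would use the decomposition
\begin{equation*}
A^\eta W_G(t) - A^\eta W_G(s) = \int_s^t A^\eta S(t-r) G(r)\,dW(r) + \int_0^s [S(t-s)-I]A^\eta S(s-r) G(r)\,dW(r).
\end{equation*}
The It\^o isometry reduces the second-moment computation to two deterministic integrals. The first, bounded with \eqref{P6}, produces a term of order $(t-s)^{1-2\eta}$. For the second, I would factor $[S(t-s)-I]A^\eta S(s-r) = \bigl([S(t-s)-I]A^{-\delta}\bigr)\bigl(A^{\eta+\delta}S(s-r)\bigr)$ and apply \eqref{P8} and \eqref{P6}, imposing $\delta\in(0,\tfrac{1}{2}-\eta)$ so that $(s-r)^{-2(\eta+\delta)}$ remains integrable on $(0,s)$. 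Since the integrand is deterministic, $A^\eta W_G(t)-A^\eta W_G(s)$ is a centred $H$-valued Gaussian variable, so its $L^{2p}$ norm is bounded by a constant times the $p$-th power of its $L^2$ norm; Theorem~\ref{Thm1} with $p$ large then yields a continuous modification that is H\"older on each compact subinterval of $(0,T]$, with exponent arbitrarily below the balance of the two terms. Choosing $\delta$ suitably and absorbing bounded powers of $s$ on compact subintervals yields any $\gamma<\frac{1+2\beta}{4}-\eta$ compatible with the standing constraints on $\eta$ and $\beta$. The main obstacle is precisely this bookkeeping step: tuning $\delta$ and the Kolmogorov moment $p$ so as to reach the stated H\"older exponent while respecting $\eta+\delta<\tfrac{1}{2}$; items (i) and (ii) are then routine Beta-function and singular-integral calculations.
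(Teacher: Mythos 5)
Your proposal is correct, and for parts (i) and (ii) it follows essentially the computation in the paper: the pointwise bounds \eqref{P5}--\eqref{P6} reduce everything to a Beta integral, and closedness of $A^\theta$ gives the commutation identities. The one place where you diverge usefully is the continuity of $A^\theta W_G$ in (ii) and the H\"older regularity in (iii). The paper disposes of the continuity in (ii) with the remark that the process ``is a continuous martingale,'' which is loose (the stochastic convolution is not a martingale, since the integrand depends on $t$), and it omits the proof of (iii) altogether, citing external references. You instead supply a self-contained argument: decompose the increment as $\int_s^t A^\eta S(t-r)G(r)\,dW(r)+\int_0^s[S(t-s)-I]A^{-\delta}A^{\eta+\delta}S(s-r)G(r)\,dW(r)$, apply the It\^o isometry with \eqref{P6} and \eqref{P8}, and use Gaussian moment equivalence plus Theorem~\ref{Thm1}. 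This yields $\mathbb E\|A^\eta W_G(t)-A^\eta W_G(s)\|^2\le C\bigl[(t-s)^{1-2\eta}+(t-s)^{2\delta}s^{1-2\eta-2\delta}\bigr]$ for any $\delta<\tfrac12-\eta$, hence a modification that is H\"older of any exponent $\gamma<\tfrac12-\eta$; since $\beta<\tfrac12$ forces $\tfrac{1+2\beta}{4}-\eta<\tfrac12-\eta$, this covers (indeed slightly exceeds) the stated range, so the ``bookkeeping'' you worry about is not actually delicate here. In short: same proof for (i)--(ii), and a correct, more explicit replacement for the omitted proof of (iii).
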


\begin{proof}
First, let us prove {\rm (i)}. It follows from  \eqref{P5} and \eqref{P6} that 
\begin{align*}
&\int_0^t\|A^\theta S(t-s)F_2(s)\|ds\\
&\leq \int_0^t\|A^\theta S(t-s)\| \|F_2(s)\|ds \notag\\
&\leq \|F_2\|_{\mathcal F^{\beta,\sigma}} \iota_\theta\int_0^t(t-s)^{-\theta} s^{\beta-1}ds\notag\\
&= \|F_2\|_{\mathcal F^{\beta,\sigma}} \iota_\theta t^{\beta-\theta}\int_0^1u^{\beta-1}(1-u)^{-\theta}du\notag\\
&=\iota_\theta \|F_2\|_{\mathcal F^{\beta,\sigma}}  B(\beta,1-\theta) t^{\beta-\theta}, \hspace{2cm} 0<t\leq T.  
\end{align*}
 Hence,   $\int_0^\cdot A^\theta S(\cdot -s)F_2(s)ds$ is continuous on $(0,T].$ It  is  also continuous at $t=0$ if $\theta<\beta$. Since $A^\theta$ is closed, the statements in  {\rm (i)} follow.
 
Let us next verify  {\rm (ii)}. Thanks to \eqref{P6}, 
\begin{align*}
&\int_0^t\|A^\theta S(t-s)G(s)\|_{L_2(U;H)}^2 ds \\
&\leq \int_0^t\|A^\theta S(t-s)\|^2 \|G(s)\|_{L_2(U;H)}^2 ds\\
&\leq  \iota_\theta^2 \int_0^t (t-s)^{-2\theta} \|G\|_{B((0,T]; L_2(U;H))}^2ds\\
&=\frac{ \iota_\theta^2 t^{1-2\theta}\|G\|_{B([0,T]; L_2(U;H))}^2}{1-2\theta}  <\infty, \hspace{1cm}0\leq t\leq T.
\end{align*}
The process  $\int_0^\cdot  A^\theta S(\cdot -s)G(s)dW(s)$ is therefore well-defined. The definition of stochastic integrals then provides that this process is a continuous martingale on $[0,T]$. Since $A^\beta$ is closed, {\rm (ii)}  follows.

The proof for {\rm (iii)} is  similar to one in \cite{prato,Ton1}. So, we omit it.
\end{proof}

\subsection{Existence and regularity of  solutions} \label{sub3.1}
Let us first prove the existence of unique local mild solutions to \eqref{P1}  and show their space-time regularity.
\begin{theorem}\label{Thm5}
Let {\rm (H1)}, {\rm (H2)}, {\rm (H3)}, {\rm (H4)} and {\rm (H5)}   be satisfied.
Let $\xi\in \mathcal D(A^\beta)$ such that $\mathbb E\|A^\beta\xi\|^2<\infty$.  Then, \eqref{P1} possesses a unique local mild solution $X$ in the function space:
\begin{equation}\label{P10}
X\in  \mathcal C([0,T_{loc}];\mathcal D(A^\beta)), \quad A^\eta X\in \mathcal C^\gamma ((0,T_{loc}];H)  \hspace{1cm} \text{a.s.}
\end{equation}
for any $ 0<\gamma <\frac{1+2\beta}{4}-\eta.$ 
 Furthermore, $X$ satisfies the estimate:
\begin{equation}\label{P11}
\mathbb E \|A^\beta X(t)\|^2 +t^{2(\eta-\beta)} \mathbb E \|A^\eta X(t)\|^2 \leq C_{F_1,F_2,G,\xi},    \hspace{1cm} 0\leq t\leq T_{loc}.
\end{equation}
Here, $T_{loc}$ and  $C_{F_1,F_2,G,\xi}$ are non-random constants depending  on the exponents and  $\mathbb E \|F_1(0)\|^2,$ $ \mathbb E \|A^\beta\xi\|^2,$  $\|F_2\|_{\mathcal F^{\beta,\sigma}}^2$, $ \|G\|_{B([0,T];L_2(U;H))}^2.$ 
\end{theorem}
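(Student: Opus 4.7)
I would prove Theorem \ref{Thm5} by a Banach fixed-point argument in a weighted $L^2$-space of predictable processes. Let $\mathcal E_T$ denote the space of $H$-valued predictable processes $X$ on $[0,T]$ with norm
$$\|X\|_{\mathcal E_T}^2 := \sup_{0\le t\le T}\mathbb E\|A^\beta X(t)\|^2 + \sup_{0<t\le T}t^{2(\eta-\beta)}\mathbb E\|A^\eta X(t)\|^2,$$
a Banach space under this norm. The second weight reflects the fact that, since $\xi\in\mathcal D(A^\beta)$ with $\beta<\eta$, the semigroup smoothing produces an $A^\eta$-trace of $X$ for $t>0$ with a controlled singularity $s^{-(\eta-\beta)}$ near $s=0$. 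Define
$$(\Phi X)(t) = S(t)\xi + \int_0^t S(t-s)F_1(X(s))\,ds + \int_0^t S(t-s)F_2(s)\,ds + W_G(t).$$
The plan is to show that, for a sufficiently small nonrandom $T_{loc}>0$, $\Phi$ leaves invariant a closed ball of $\mathcal E_{T_{loc}}$ and contracts on it.

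The estimates are obtained termwise, in both pieces of the norm. For the linear part, $\mathbb E\|A^\beta S(t)\xi\|^2\le\iota_0^2\mathbb E\|A^\beta\xi\|^2$ and $\mathbb E\|A^\eta S(t)\xi\|^2\le\iota_{\eta-\beta}^2 t^{-2(\eta-\beta)}\mathbb E\|A^\beta\xi\|^2$ follow from \eqref{P6} applied to $A^{\eta-\beta}S(t)A^\beta\xi$. The $F_2$-integral is handled by Lemma \ref{Thm4}(i) with $\theta\in\{\beta,\eta\}$ and the stochastic convolution $W_G$ by Lemma \ref{Thm4}(ii); both values of $\theta$ are admissible since $\eta<\tfrac12$. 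The decisive piece is the $F_1$-integral. Minkowski's integral inequality combined with hypothesis {\rm (H3)} yields
$$\Bigl(\mathbb E\Bigl\|A^\theta\int_0^t S(t-s)F_1(X(s))ds\Bigr\|^2\Bigr)^{1/2} \le \iota_\theta\int_0^t (t-s)^{-\theta}\bigl[(\mathbb E\|F_1(0)\|^2)^{1/2}+c_{F_1}(\mathbb E\|A^\eta X(s)\|^2)^{1/2}\bigr]ds.$$
Inserting $(\mathbb E\|A^\eta X(s)\|^2)^{1/2}\le s^{-(\eta-\beta)}\|X\|_{\mathcal E_T}$ reduces the inner integral to a Beta integral (convergent since $\theta\in\{\beta,\eta\}<1$ and $\eta-\beta<1$), giving growth of order $t^{1-\eta}$ in the $A^\beta$-norm and $t^{1-2\eta+\beta}$ in the $A^\eta$-norm; multiplying the latter by the weight $t^{\eta-\beta}$ again produces $t^{1-\eta}$. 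The same argument yields $\|\Phi X-\Phi Y\|_{\mathcal E_T}\le CT^{1-\eta}\|X-Y\|_{\mathcal E_T}$. Since $1-\eta>0$, choosing $T_{loc}$ small delivers both invariance of a suitable ball and the contraction property.

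Banach's theorem then provides a unique fixed point $X\in\mathcal E_{T_{loc}}$, which is the desired mild solution, and the very same estimates immediately imply \eqref{P11}. For the pathwise regularity \eqref{P10} I analyze each term of the mild formula at this $X$: analyticity of $S(t)$ supplies $A^\beta S(\cdot)\xi\in\mathcal C([0,T_{loc}];H)$ and smoothness of $A^\eta S(\cdot)\xi$ on $(0,T_{loc}]$; Lemma \ref{Thm4}(i) gives the continuity of the $F_2$-integral in the two norms; Lemma \ref{Thm4}(ii), (iii) handle $W_G$. For the nonlinear $F_1$-integral a bootstrap is used: the $L^2$ estimate \eqref{P11} together with the H\"{o}lder regularity already established for the other summands shows, almost surely, that $s\mapsto F_1(X(s,\omega))$ lies in $\mathcal F^{\beta,\sigma}((0,T_{loc}];H)$, after which Theorem \ref{Thm2} applied pathwise supplies the required space-time regularity.

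The main obstacle is the mismatch between the regularity of the initial datum, $\xi\in\mathcal D(A^\beta)$, and the domain of $F_1$, namely $\mathcal D(A^\eta)$ with $\beta<\eta$: the weighted norm must accommodate the forced singularity $s^{-(\eta-\beta)}$ of $A^\eta X$ at $s=0$, while every estimate must simultaneously yield a positive power of $T_{loc}$ so that the contraction closes. The remaining parameter constraint $\beta>2\eta-\tfrac12$ plays no role in the fixed-point construction itself; it enters only to ensure positivity of the H\"{o}lder exponent $\gamma<(1+2\beta)/4-\eta$ coming from Lemma \ref{Thm4}(iii).
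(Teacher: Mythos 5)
Your fixed-point construction is essentially the paper's: the same weighted norm $\sup_t\mathbb E\|A^\beta X(t)\|^2+\sup_t t^{2(\eta-\beta)}\mathbb E\|A^\eta X(t)\|^2$, the same closed ball, and the same source of smallness (a positive power of $T_{loc}$ from the Beta integrals); whether one uses Minkowski's integral inequality or Cauchy--Schwarz on the $F_1$-convolution is immaterial. The estimate \eqref{P11} and uniqueness then follow as you say. The genuine gap is in your treatment of the pathwise regularity \eqref{P10}, specifically the claim that one can bootstrap to $s\mapsto F_1(X(s,\omega))\in\mathcal F^{\beta,\sigma}((0,T_{loc}];H)$ a.s.\ and then apply Theorem \ref{Thm2} pathwise. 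Membership in $\mathcal F^{\beta,\sigma}$ requires weighted \emph{$\sigma$-H\"older} continuity of $F_1(X(\cdot,\omega))$, hence (via (H3)) of $A^\eta X(\cdot,\omega)$. The $L^2$ fixed-point estimates give only moment bounds at fixed times, not pathwise H\"older continuity; the argument is circular because the $F_1$-integral is itself a summand of $X$; and, worst, the best pathwise H\"older exponent obtainable for $A^\eta X$ is $\gamma<\frac{1+2\beta}{4}-\eta$, which can be strictly smaller than $\sigma$ (take $\eta=0.4$, $\beta=0.35$, $\sigma=0.3$: then $\frac{1+2\beta}{4}-\eta=0.025<\sigma$). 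So $F_1(X(\cdot))$ need not lie in $\mathcal F^{\beta,\sigma}$ pathwise, and Theorem \ref{Thm2} cannot be invoked this way. The paper avoids this entirely: it estimates the second moment of the increments, $\mathbb E\|A^\eta[\Psi X(t)-\Psi X(s)]\|^2\lesssim s^{2(\beta-\eta-\rho)}(t-s)^{2\rho}+(t-s)^{\frac32+\beta-2\eta}+\cdots$, and applies the Kolmogorov continuity test (Theorem \ref{Thm1}); the condition $\beta>2\eta-\frac12$ is precisely what makes the exponent $\frac32+\beta-2\eta$ exceed $1$, so it is needed for this step too, not only for Lemma \ref{Thm4}(iii) as you assert. (The paper does prove an $\mathcal F^{\beta,\sigma}$ statement, but only for $\mathbb E F_1(X(\cdot))$ and only under the extra hypothesis $\sigma+\eta\le\frac12$, in Theorem \ref{Thm7}.)

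A second, smaller gap: continuity of $A^\beta\Phi X$ at $t=0$. Lemma \ref{Thm4}(i) yields continuity of $A^\theta\int_0^\cdot S(\cdot-s)F_2(s)\,ds$ at $t=0$ only for $\theta<\beta$, so it does not cover the $\theta=\beta$ case you need for $X\in\mathcal C([0,T_{loc}];\mathcal D(A^\beta))$. The paper supplies a separate argument there, using the limit $z=\lim_{t\to0}t^{1-\beta}F_2(t)$, the identity $\int_0^tA^\beta S(t-s)F_2(t)\,ds=[I-S(t)]A^{\beta-1}F_2(t)$, and density of $\mathcal D(A^\beta)$; the $F_1$-integral at $t=0$ is handled by a moment bound plus continuity on $(0,T_{loc}]$. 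Your outline needs both of these supplements to close.
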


\begin{proof}
We  use the fixed point theorem for contractions to prove the existence and uniqueness of  local solutions.
For each $0<S\leq T$,  set the underlying space:
\begin{align*}
\Xi (S)=&\{Y\in \mathcal C((0,S];\mathcal D(A^\eta)) \cap  \mathcal C([0,S];\mathcal D(A^\beta)) \text{  such that   }\\
& \sup_{0<t\leq S} t^{2(\eta-\beta)} \mathbb E\|A^\eta Y(t)\|^2+ \sup_{0\leq t\leq S}\mathbb E\|A^\beta Y(t)\|^2 <\infty \}.
\end{align*}
Up to indistinguishability, $\Xi (S)$  is then a Banach space with norm
\begin{equation} \label{P13}
\|Y\|_{\Xi (S)}=\Big[\sup_{0<t\leq S} t^{2(\eta-\beta)} \mathbb E\|A^\eta Y(t)\|^2+ \sup_{0\leq t\leq S}\mathbb E\|A^\beta Y(t)\|^2\Big]^{\frac{1}{2}}. 
\end{equation}

Let fix a constant $\kappa>0$  such that 
\begin{equation}  \label{P14}
\frac{\kappa^2}{2}> C_1\vee C_2,
\end{equation}
 where two constants $C_1$ and $C_2$ will be fixed below.
Consider a subset $\Upsilon(S) $ of $\Xi (S)$ which consists of  functions $Y\in \Xi (S)$  such that
\begin{equation}  \label{P15}
\max\left\{\sup_{0<t\leq S} t^{2(\eta-\beta)} \mathbb E\|A^\eta Y(t)\|^2,
 \sup_{0\leq t\leq S}\mathbb E\|A^\beta Y(t)\|^2\right\} \leq \kappa^2.
\end{equation}
Obviously, $\Upsilon(S) $  is a nonempty closed subset of $\Xi (S)$.

For $Y\in \Upsilon(S)$, we define a function on $[0,S]:$
\begin{align}  
\Phi Y(t)=&S(t) \xi+\int_0^t S(t-s)[F_1(Y(s))+F_2(s)]ds \label{P16}\\
&+ \int_0^t S(t-s)G(s) dW(s). \notag
\end{align}
Our goal is then to verify that $\Phi$ is a contraction mapping from $\Upsilon(S)$ into itself, provided that $S$ is sufficiently small, and that the fixed point of $\Phi$ is the desired solution of \eqref{P1}. For this purpose, we divide the proof into four steps.

{\bf Step 1}. Let us show that $ \Phi Y \in \Upsilon(S)$ for  $Y \in \Upsilon(S). $

 Let  $Y\in \Upsilon(S)$. 
Due to {\rm (H3)} and \eqref{P15}, we observe that
\begin{align}
&\mathbb E\|F_1(Y(t))\|^2 \notag \\
&\leq \mathbb E[c_{F_1}\|A^\eta Y(t)\|+\|F_1(0)\|]^2\notag \\
&\leq 2[c_{F_1}^2 \mathbb E \|A^\eta Y(t)\|^2 +\mathbb E\|F_1(0)\|^2]\notag\\
&\leq 2[c_{F_1}^2 \kappa^2 t^{2(\beta-\eta)}   +\mathbb E\|F_1(0)\|^2], \hspace{2cm} 0<t\leq S.  \label{P17}
\end{align}

First, we  verify that  $ \Phi Y$ satisfies \eqref{P15}.  For $\beta\leq \theta < \frac{1}{2}$,  \eqref{P16} gives
\begin{align*}
&t^{2(\theta-\beta)}\mathbb E\|A^\theta\{\Phi Y\}(t)\|^2   \notag\\
 \leq & 3t^{2(\theta-\beta)}\mathbb E \Big[ \|A^\theta S(t) \xi\|^2+\Big|\Big|\int_0^tA^\theta S(t-s)[F_1(Y(s))+F_2(s)]ds\Big|\Big|^2 \notag\\
&+\Big|\Big|\int_0^tA^\theta S(t-s) G(s) dW(s)\Big|\Big|^2 \Big]  \notag\\
\leq & 3t^{2(\theta-\beta)}  \|A^{\theta-\beta} S(t)\|^2\mathbb E\|A^\beta \xi\|^2+6t^{2(\theta-\beta)}\mathbb E\Big|\Big|\int_0^tA^\theta S(t-s)F_1(Y(s))ds\Big|\Big|^2 \notag\\
&+6t^{2(\theta-\beta)}\mathbb E\Big|\Big|\int_0^t A^\theta S(t-s)F_2(s)ds\Big|\Big|^2 \notag\\
& +3t^{2(\theta-\beta)} \int_0^t\|A^\theta S(t-s) G(s)\|_{L_2(U;H)}^2 ds. \notag
\end{align*}
On the account of \eqref{P5}, \eqref{P6} and Lemma \ref{Thm4}, we have
\begin{align*}
&t^{2(\theta-\beta)}\mathbb E\|A^\theta\{\Phi Y\}(t)\|^2   \notag\\
\leq  &  3\iota_{\theta-\beta}^2 \mathbb E\|A^\beta \xi\|^2+6t^{2(\theta-\beta)} \iota_\theta^2 \mathbb E \Big[\int_0^t (t-s)^{-\theta}\|F_1(Y(s))\|ds\Big]^2\notag\\
&+6 \iota_\theta^2 \|F_2\|_{\mathcal F^{\beta,\sigma}}^2 B(\beta,1-\theta)^2 +\frac{3 \iota_\theta^2  \|G\|_{B([0,T];L_2(U;H))}^2   t^{1-2\beta}}{1-2\theta}\notag\\
\leq  &  3\iota_{\theta-\beta}^2 \mathbb E\|A^\beta \xi\|^2+6t^{1+2(\theta-\beta)} \iota_\theta^2  \int_0^t (t-s)^{-2\theta}\mathbb E \|F_1(Y(s))\|^2ds\notag\\
&+6 \iota_\theta^2 \|F_2\|_{\mathcal F^{\beta,\sigma}}^2 B(\beta,1-\theta)^2 +\frac{3 \iota_\theta^2  \|G\|_{B([0,T];L_2(U;H))}^2   t^{1-2\beta}}{1-2\theta}.  \notag
\end{align*}

 The second term in the right-hand side of the latter inequality is estimated by using \eqref{P17}:
\begin{align*}
&6t^{1+2(\theta-\beta)} \iota_\theta^2  \int_0^t (t-s)^{-2\theta}\mathbb E \|F_1(Y(s))\|^2ds   \notag\\
\leq &  12 t^{1+2(\theta-\beta)} \iota_\theta^2  \int_0^t (t-s)^{-2\theta}[c_{F_1}^2 \kappa^2 s^{2(\beta-\eta)}   +\mathbb E\|F_1(0)\|^2]ds\notag\\
\leq  &  12 \iota_\theta^2 c_{F_1}^2 \kappa^2 t^{1+2(\theta-\eta)}\int_0^t (t-s)^{-2\theta} s^{2(\beta-\eta)} ds+\frac{12 \iota_\theta^2 \mathbb E\|F_1(0)\|^2}{1-2\theta} t^{2(1-\beta)}\notag\\
= & 12\iota_\theta^2 c_{F_1}^2 \kappa^2 B( 1+2\beta-2\eta, 1-2\theta) t^{2(1+\beta-2\eta)}+\frac{12 \iota_\theta^2 \mathbb E\|F_1(0)\|^2}{1-2\theta} t^{2(1-\beta)}.
\end{align*}
Thus,
\begin{align}
&t^{2(\theta-\beta)}\mathbb E\|A^\theta\{\Phi Y\}(t)\|^2  \notag\\
\leq  &  3\iota_{\theta-\beta}^2 \mathbb E\|A^\beta \xi\|^2+6 \iota_\theta^2 \|F_2\|_{\mathcal F^{\beta,\sigma}}^2 B(\beta, 1-\theta)^2  +\frac{3 \iota_\theta^2  \|G\|_{B([0,T];L_2(U;H))}^2   t^{1-2\beta}}{1-2\theta}\notag\\
&+ 12\iota_\theta^2 c_{F_1}^2 \kappa^2 B( 1+2\beta-2\eta, 1-2\theta) t^{2(1+\beta-2\eta)}+\frac{12 \iota_\theta^2 \mathbb E\|F_1(0)\|^2}{1-2\theta} t^{2(1-\beta)}.  \notag
\end{align}

We apply these estimates with  $\theta=\eta$ and $\theta=\beta$. It is therefore observed that if $C_1$ and $C_2$ are fixed in such a way that
\begin{equation} \label{P18}
\begin{aligned}
C_1>&3\iota_{\eta-\beta}^2 \mathbb E\|A^\beta \xi\|^2+6 \iota_\eta^2 \|F_2\|_{\mathcal F^{\beta,\sigma}}^2 B(\beta, 1-\eta)^2,\\
C_2>&3\iota_0^2 \mathbb E\|A^\beta \xi\|^2+6 \iota_\beta^2 \|F_2\|_{\mathcal F^{\beta,\sigma}}^2 B(\beta, 1-\beta)^2,\\
\end{aligned}
\end{equation}
and if   $S$ is sufficiently small,    then 
\begin{align}
t^{2(\eta-\beta)}  & \mathbb E\|A^\eta\{\Phi Y\}(t)\|^2        \notag\\
  \leq & C_1+\frac{3 \iota_\eta^2  \|G\|_{B([0,T];L_2(U;H))}^2   t^{1-2\beta}}{1-2\eta}\notag\\
  &+ 12\iota_\eta^2 c_{F_1}^2 \kappa^2 B( 1+2\beta-2\eta, 1-2\eta) t^{2(1+\beta-2\eta)}+\frac{12 \iota_\eta^2 \mathbb E\|F_1(0)\|^2}{1-2\eta} t^{2(1-\beta)}\notag\\
 \leq& \frac{\kappa^2}{2}+\frac{3 \iota_\eta^2  \|G\|_{B([0,T];L_2(U;H))}^2   t^{1-2\beta}}{1-2\eta}\notag\\
 &+ 12\iota_\eta^2 c_{F_1}^2 \kappa^2 B( 1+2\beta-2\eta, 1-2\eta) t^{2(1+\beta-2\eta)}
          +\frac{12 \iota_\eta^2 \mathbb E\|F_1(0)\|^2}{1-2\eta} t^{2(1-\beta)}\notag\\
<&\kappa^2, \hspace{3cm} 0<t\leq S,\label{P19}
\end{align}
and 
\begin{align}
\mathbb E  &  \|A^\beta\{\Phi Y\}(t)\|^2             \notag\\
  \leq& C_2+\frac{3 \iota_\beta^2  \|G\|_{B([0,T];L_2(U;H))}^2   t^{1-2\beta}}{1-2\beta}\notag\\
 & + 12\iota_\beta^2 c_{F_1}^2 \kappa^2 B( 1+2\beta-2\eta, 1-2\beta) t^{2(1+\beta-2\eta)}\notag\\
  &+\frac{12 \iota_\beta^2 \mathbb E\|F_1(0)\|^2}{1-2\beta} t^{2(1-\beta)}\notag\\
 \leq&\frac{\kappa^2}{2}+\frac{3 \iota_\beta^2  \|G\|_{B([0,T];L_2(U;H))}^2   t^{1-2\beta}}{1-2\beta}\notag\\
 & +12\iota_\beta^2 c_{F_1}^2 \kappa^2 B( 1+2\beta-2\eta, 1-2\beta) t^{2(1+\beta-2\eta)}\notag\\
  &+\frac{12 \iota_\beta^2 \mathbb E\|F_1(0)\|^2}{1-2\beta} t^{2(1-\beta)}\notag\\
<&\kappa^2, \hspace{3cm} 0<t\leq S. \label{P20}
\end{align}
 We have thus shown that
 \begin{equation*} 
\max\left\{\sup_{0<t\leq S} t^{2(\eta-\beta)} \mathbb E\|A^\eta \Phi Y(t)\|^2,
 \sup_{0\leq t\leq S}\mathbb E\|A^\beta \Phi Y(t)\|^2\right\} \leq \kappa^2.
\end{equation*}
 This means that $ \Phi Y$ satisfies \eqref{P15}.  
 
 Next, we  prove that 
 $$\Phi Y\in  \mathcal C((0,S];\mathcal D(A^\eta))\cap \mathcal C([0,S];\mathcal D(A^\beta)) \hspace{1cm} \text{a.s.}$$
Divide $\Phi Y$ into two parts: $\Phi Y(t)=\Psi Y(t)+ W_G(t),$
where 
\begin{equation}  \label{P21}
{\Psi Y}(t)=S(t) \xi+\int_0^t S(t-s)[F_1(Y(s))+F_2(s)]ds,
\end{equation}
and $W_G$ is the stochastic convolution defined in Lemma \ref{Thm4}. 
 Lemma \ref{Thm4}-{\rm (ii)} for $\theta=\eta$  provides that  
$$W_G\in  \mathcal C([0,S];\mathcal D(A^\eta))\subset \mathcal C([0,S];\mathcal D(A^\beta))  \hspace{1cm} \text{a.s.}$$
Therefore,  it suffices to verify that 
\begin{equation}  \label{P15.5}
\Psi Y\in  \mathcal C((0,S];\mathcal D(A^\eta))\cap \mathcal C([0,S];\mathcal D(A^\beta)) \hspace{1cm} \text{a.s.}
\end{equation}

In order to  prove \eqref{P15.5}, we use the Kolmogorov test.  
For $0<s<t\leq S$,  the semigroup property  gives 
\begin{align*}
&\Psi Y(t)-\Psi Y(s)\\
=&S(t-s)S(s)\xi+ S(t-s)\int_0^s S(s-r)[F_1(Y(r))+F_2(r)]dr\\
& +\int_s^t S(t-r)[F_1(Y(r))+F_2(r)]dr -\Psi Y(s)\\
=&[S(t-s)-I]\Psi Y(s)+\int_s^t S(t-r)[F_1(Y(r))+F_2(r)]dr.
\end{align*}
Let $\frac{1}{2}<\rho< 1-\eta$. Thanks to  \eqref{P5},  \eqref{P6} and \eqref{P8}, we have
\begin{align*}
&\|A^\eta[\Psi Y(t)-\Psi Y(s)]\|  \notag\\
\leq &\|[S(t-s)-I]A^{-\rho}\|  \|A^{\eta+\rho}\Psi Y(s)\|\notag\\
&+\int_s^t \|A^\eta S(t-r)\| [\|F_1(Y(r))\|+\|F_2(r)\|]dr \notag\\
\leq &\frac{\iota_{1-\rho}(t-s)^\rho}{\rho} \Big|\Big|A^{\eta+\rho}\Big[S(s) \xi+\int_0^s S(s-r)[F_1(Y(r))+F_2(r)]dr\Big]\Big|\Big| \notag \\
&+\iota_\eta\int_s^t (t-r)^{-\eta} [\|F_1(Y(r))\|+\|F_2(r)\|]dr \notag\\
\leq &\frac{\iota_{1-\rho}(t-s)^\rho}{\rho}  \|A^{\eta+\rho-\beta}S(s)\| \|A^\beta  \xi\|\notag\\
&+\frac{\iota_{1-\rho}(t-s)^\rho}{\rho} \int_0^s \|A^{\eta+\rho} S(s-r)\| \|F_1(Y(r))\|dr \notag \\
&+\frac{\iota_{1-\rho}(t-s)^\rho}{\rho} \int_0^s \|A^{\eta+\rho} S(s-r)\| \|F_2(r)\|dr\notag\\
&+\iota_\eta\int_s^t (t-r)^{-\eta} \|F_1(Y(r))\|dr+\iota_\eta\int_s^t (t-r)^{-\eta} \|F_2(r)\|dr \notag\\
\leq &\frac{\iota_{1-\rho}\iota_{\eta+\rho-\beta}(t-s)^\rho}{\rho}  s^{-\eta-\rho+\beta} \|A^\beta  \xi\|\notag\\
&+\frac{\iota_{1-\rho}\iota_{\eta+\rho} (t-s)^\rho}{\rho}\int_0^s  (s-r)^{-\eta-\rho} \|F_1(Y(r))\|dr \notag\\
&+\frac{\iota_{1-\rho}\iota_{\eta+\rho} \|F_2\|_{\mathcal F^{\beta,\sigma}}(t-s)^\rho}{\rho} \int_0^s (s-r)^{-\eta-\rho}   r^{\beta-1} dr \notag \\
& +\iota_\eta\int_s^t (t-r)^{-\eta} \|F_1(Y(r))\|dr+\iota_\eta\|F_2\|_{\mathcal F^{\beta,\sigma}}\int_s^t (t-r)^{-\eta} r^{\beta-1} dr \notag\\
= &\frac{\iota_{1-\rho} \iota_{\eta+\rho-\beta}}{\rho}\|A^\beta  \xi\| s^{\beta-\eta-\rho}(t-s)^\rho \notag\\
&+\frac{\iota_{1-\rho} \iota_{\eta+\rho}\|F_2\|_{\mathcal F^{\beta,\sigma}}B(\beta,1-\eta-\rho)}{\rho}  s^{\beta-\eta-\rho}(t-s)^\rho \notag \\
&+\iota_\eta\|F_2\|_{\mathcal F^{\beta,\sigma}}\int_s^t (t-r)^{-\eta}r^{\beta-1}dr \notag\\
&+\frac{\iota_{1-\rho}\iota_{\eta+\rho} (t-s)^\rho}{\rho}\int_0^s  (s-r)^{-\eta-\rho} \|F_1(Y(r))\|dr\notag\\
&+\iota_\eta\int_s^t (t-r)^{-\eta} \|F_1(Y(r))\|dr.\notag 
\end{align*}
Dividing $\beta-1$ as $\beta-1=(\eta+\rho-1)+(\beta-\eta-\rho),$ it follows that 
\begin{align*}
\int_s^t (t-r)^{-\eta}  r^{\beta-1} dr &\leq \int_s^t (t-r)^{-\eta}  (r-s)^{\eta+\rho-1} s^{\beta-\eta-\rho} dr\\
&= B(\eta+\rho,1-\eta) s^{\beta-\eta-\rho}(t-s)^\rho.
\end{align*}
Hence, 
\begin{align*}
&\|A^\eta[\Psi Y(t)-\Psi Y(s)]\|  \notag\\
\leq  &\frac{\iota_{1-\rho}\iota_{\eta+\rho-\beta}}{\rho} \|A^\beta  \xi\| s^{\beta-\eta-\rho}(t-s)^\rho \notag\\
&+\Big[\frac{\iota_{1-\rho} \iota_{\eta+\rho} B(\beta,1-\eta-\rho)}{\rho} +\iota_\eta B(\eta+\rho,1-\eta)\Big]\|F_2\|_{\mathcal F^{\beta,\sigma}}s^{\beta-\eta-\rho}(t-s)^\rho \notag\\
&+\frac{\iota_{1-\rho}\iota_{\eta+\rho}}{\rho} (t-s)^\rho \int_0^s  (s-r)^{-\eta-\rho} \|F_1(Y(r))\|dr \notag\\
& +\iota_\eta\int_s^t (t-r)^{-\eta} \|F_1(Y(r))\|dr.\notag 
\end{align*}
Taking the expectation of the squares of the both hand sides of the above inequality, we obtain that 
\begin{align*}
\mathbb E&\|A^\eta[\Psi Y(t)-\Psi Y(s)]\|^2 \notag\\
\leq &\frac{4\iota_{1-\rho}^2\iota_{\eta+\rho-\beta}^2}{\rho^2}  \mathbb E\|A^\beta  \xi\|^2 s^{2(\beta-\eta-\rho)}(t-s)^{2\rho} \notag\\
& +4\Big[\frac{\iota_{1-\rho}\iota_{\eta+\rho} B(\beta,1-\eta-\rho)}{\rho} +\iota_\eta B(\eta+\rho,1-\eta)\Big]^2\notag\\
&\times \|F_2\|_{\mathcal F^{\beta,\sigma}}^2s^{2(\beta-\eta-\rho)}(t-s)^{2\rho} \notag\\
&+\frac{4\iota_{1-\rho}^2\iota_{\eta+\rho}^2}{\rho^2} (t-s)^{2\rho}\mathbb E\Big[ \int_0^s  (s-r)^{-\eta-\rho} \|F_1(Y(r))\|dr\Big]^2  \notag\\
&+4\iota_\eta^2\mathbb E\Big[\int_s^t (t-r)^{-\eta} \|F_1(Y(r))\|dr\Big]^2.\notag
\end{align*}
Since 
\begin{align*}
&\Big[ \int_0^s  (s-r)^{-\eta-\rho} \|F_1(Y(r))\|dr\Big]^2  \\
&=\Big[ \int_0^s  (s-r)^{\frac{-\eta-\rho}{2}} (s-r)^{\frac{-\eta-\rho}{2}}\|F_1(Y(r))\|dr\Big]^2  \\
&\leq \int_0^s  (s-r)^{-\eta-\rho}dr \int_0^s (s-r)^{-\eta-\rho}\|F_1(Y(r))\|^2dr\\
&=\frac{s^{1-\eta-\rho}}{1-\eta-\rho} \int_0^s (s-r)^{-\eta-\rho}\|F_1(Y(r))\|^2dr,
\end{align*}
we arrive at 
\begin{align}
\mathbb E&\|A^\eta[\Psi Y(t)-\Psi Y(s)]\|^2 \label{P22}\\
\leq &\frac{4\iota_{1-\rho}^2  \iota_{\eta+\rho-\beta}^2}{\rho^2} \mathbb E\|A^\beta  \xi\|^2 s^{2(\beta-\eta-\rho)}(t-s)^{2\rho} \notag\\
& +4\Big[\frac{\iota_{1-\rho}\iota_{\eta+\rho} B(\beta,1-\eta-\rho)}{\rho}  +\iota_\eta B(\eta+\rho,1-\eta)\Big]^2 \notag\\
&\times\|F_2\|_{\mathcal F^{\beta,\sigma}}^2s^{2(\beta-\eta-\rho)}(t-s)^{2\rho} \notag\\
&+\frac{4\iota_{1-\rho}^2\iota_{\eta+\rho}^2}{\rho^2(1-\eta-\rho)} (t-s)^{2\rho} s^{1-\eta-\rho}     \int_0^s  (s-r)^{-\eta-\rho} \mathbb E\|F_1(Y(r))\|^2dr  \notag\\
&+4\iota_\eta^2(t-s) \int_s^t (t-r)^{-2\eta}\mathbb E \|F_1(Y(r))\|^2dr.\notag
\end{align}

Both the integrals in \eqref{P22} can be estimated by using   \eqref{P17}:   
\begin{align}
& \int_0^s  (s-r)^{-\eta-\rho} \mathbb E\|F_1(Y(r))\|^2dr \notag\\
 \leq &
2c_{F_1}^2 \kappa^2\int_0^s  (s-r)^{-\eta-\rho}  r^{2(\beta-\eta)} dr  +2\mathbb E\|F_1(0)\|^2\int_0^s  (s-r)^{-\eta-\rho} dr  \notag\\
 = &
2c_{F_1}^2 \kappa^2 B(1+2\beta-2\eta,1-\eta-\rho) s^{1+2\beta-3\eta-\rho}  \label{P23}\\
&+\frac{2\mathbb E\|F_1(0)\|^2 s^{1-\eta-\rho}}{1-\eta-\rho}, \notag
\end{align}
and
\begin{align}
&\int_s^t (t-r)^{-2\eta}\mathbb E \|F_1(Y(r))\|^2dr\notag\\
&\leq 2\int_s^t (t-r)^{-2\eta}[c_{F_1}^2 \kappa^2 r^{2(\beta-\eta)} + \mathbb E\|F_1(0)\|^2]dr\notag\\
&=2 c_{F_1}^2 \kappa^2 \int_s^t (t-r)^{-2\eta}r^{2(\beta-\eta)} dr+\frac{2\mathbb E\|F_1(0)\|^2}{1-2\eta} (t-s)^{1-2\eta}.\label{P24}
\end{align}
Divide  $2(\beta-\eta)$ as $2(\beta-\eta)=(\beta-\frac{1}{2})+(\frac{1}{2}+\beta-2\eta).$  Then 
\begin{align}
&\int_s^t (t-r)^{-2\eta} r^{2(\beta-\eta)} dr\notag\\
&\leq \int_s^t (t-r)^{-2\eta} (r-s)^{\beta-\frac{1}{2}} t^{\frac{1}{2}+\beta-2\eta}dr\notag\\
&=B(\frac{1}{2}+\beta,1-2\eta) t^{\frac{1}{2}+\beta-2\eta} (t-s)^{\frac{1}{2}+\beta-2\eta}.  \label{P25}
\end{align}
Combining \eqref{P22},  \eqref{P23},  \eqref{P24} and  \eqref{P25}, we obtain an estimate:
\begin{align}
\mathbb E&\|A^\eta[\Psi Y(t)-\Psi Y(s)]\|^2 \label{P26}\\
\leq  &\frac{4\iota_{1-\rho}^2  \iota_{\eta+\rho-\beta}^2}{\rho^2} \mathbb E\|A^\beta  \xi\|^2 s^{2(\beta-\eta-\rho)}(t-s)^{2\rho} \notag\\
& +4\Big[\frac{\iota_{1-\rho}\iota_{\eta+\rho} B(\beta,1-\eta-\rho)}{\rho}  +\iota_\eta B(\eta+\rho,1-\eta)\Big]^2\notag\\
&\times \|F_2\|_{\mathcal F^{\beta,\sigma}}^2s^{2(\beta-\eta-\rho)}(t-s)^{2\rho}  \notag\\
&+\frac{8\iota_{1-\rho}^2\iota_{\eta+\rho}^2c_{F_1}^2 \kappa^2 B(1+2\beta-2\eta,1-\eta-\rho)}{\rho^2(1-\eta-\rho)} (t-s)^{2\rho} s^{2(1+\beta-2\eta-\rho)}  \notag\\
&+\frac{8\iota_{1-\rho}^2\iota_{\eta+\rho}^2\mathbb E\|F_1(0)\|^2}{\rho^2(1-\eta-\rho)^2} (t-s)^{2\rho} s^{2(1-\eta-\rho)}  \notag\\
&+8 \iota_\eta^2c_{F_1}^2 \kappa^2 B(\frac{1}{2}+\beta,1-2\eta) t^{\frac{1}{2}+\beta-2\eta} (t-s)^{\frac{3}{2}+\beta-2\eta}\notag\\
&+\frac{8\iota_\eta^2\mathbb E\|F_1(0)\|^2}{1-2\eta} (t-s)^{2(1-\eta)}, \hspace{2cm} 0<s<t\leq S.  
\notag
\end{align}

Since this estimate holds true for any $\frac{1}{2}<\rho< 1-\eta,$ and since  $1<\frac{3}{2}+\beta-2\eta<2(1-\eta)$, Theorem \ref{Thm1}  then provides that $A^\eta \Psi Y $ is H\"older continuous on $(0,S]$ with an arbitrarily smaller exponent  than $\frac{1+2\beta}{4}-\eta.$ As a consequence,  for any $ 0< \gamma < \frac{1+2\beta}{4}-\eta$, 
\begin{equation}\label{P27}
\begin{cases}
\Psi Y\in \mathcal C((0,S];\mathcal D(A^\eta))\subset \mathcal C((0,S];\mathcal D(A^\beta)) \hspace{1cm} \text{a.s.,}\\
A^\eta \Psi Y\in \mathcal C^\gamma((0,S];H) \hspace{2cm} \text{a.s.}
\end{cases}
\end{equation}

In view of \eqref{P15.5} and \eqref{P27}, it  remains to    show that $A^\beta \Psi Y$ is continuous at $t=0$. This function is separated into three terms: 
$$A^\beta\Psi Y(t)=A^\beta S(t) \xi+A^\beta \int_0^t S(t-s)F_2(s)ds+A^\beta \int_0^t S(t-s)F_1(Y(s))ds.$$
Obviously, the first term $A^\beta S(\cdot) \xi$ is  continuous at $t=0$, since 
$$\lim_{t\to 0}\|A^\beta S(t)\xi-A^\beta \xi\|=\lim_{t\to 0}\|[S(t)-I]A^\beta \xi\|=0.$$

The continuity of the second term $A^\beta \int_0^\cdot S(\cdot-s)F_2(s)ds$ at $t=0$ is verified in the following way.
By the property of the space $\mathcal F^{\beta,\sigma}((0,T];H),$  we may put $z=\lim_{t\to 0} t^{1-\beta}F_2(t)$.  Then, 
\begin{align*}
\Big|\Big|A^\beta & \int_0^t S(t-s)F_2(s)ds\Big|\Big|\notag\\
\leq &\Big|\Big|\int_0^t A^\beta S(t-s)[F_2(s)-F_2(t)]ds\Big|\Big|+\Big|\Big|\int_0^t A^\beta S(t-s)F_2(t)ds\Big|\Big|\notag\\
=&\Big|\Big|\int_0^t A^\beta S(t-s)[F_2(s)-F_2(t)]ds\Big|\Big|+\Big|\Big|[I-S(t)]A^{\beta-1}F_2(t)\Big|\Big|\notag\\
\leq &\int_0^t \|A^\beta S(t-s)\| \|F_2(t)-F_2(s)\|ds\notag\\
&
+\| t^{\beta-1}[I-S(t)]A^{\beta-1} [t^{1-\beta}F_2(t)-z]\|+\| t^{\beta-1}[I-S(t)]A^{\beta-1} z\|.\notag
\end{align*}
Thereby,  \eqref{P4}, \eqref{P6} and \eqref{P8} give 
\begin{align*}
\limsup_{t\to 0}&\Big|\Big|A^\beta\int_0^t S(t-s)F_2(s)ds\Big|\Big|\notag\\
\leq & \iota_\beta \limsup_{t\to 0}\int_0^t \iota_\beta (t-s)^{-\beta}\|F_2(t)-F_2(s)\|ds\notag\\
&+\frac{\iota_\beta}{1-\beta}\limsup_{t\to 0}\|t^{1-\beta}F_2(t)-z\|\notag\\
&+\limsup_{t\to 0}\| t^{\beta-1}[I-S(t)]A^{\beta-1} z\|\notag\\
= & \iota_\beta \limsup_{t\to 0}\int_0^t (t-s)^{\sigma-\beta}s^{-1+\beta-\sigma}  \frac{ s^{1-\beta+\sigma}\|F_2(t)-F_2(s)\|}{(t-s)^\sigma}ds\notag\\
&+\frac{\iota_\beta}{1-\beta}\limsup_{t\to 0}\|t^{1-\beta}F_2(t)-z\|+\limsup_{t\to 0}\| t^{\beta-1}[I-S(t)]A^{\beta-1} z\|\notag\\
\leq &\iota_\beta B(\beta-\sigma, 1-\beta+\sigma)\limsup_{t\to 0} \sup_{s\in[0,t)}\frac{ s^{1-\beta+\sigma}\|F_2(t)-F_2(s)\|}{(t-s)^\sigma}\notag\\
&+\limsup_{t\to 0}\| t^{\beta-1}[I-S(t)]A^{\beta-1} z\|\notag\\
=&\limsup_{t\to 0}\| t^{\beta-1}[I-S(t)]A^{\beta-1} z\|. \notag
\end{align*}
Since $\mathcal D(A^\beta)$ is dense in $H$, there exists a sequence $\{z_n\}_n$  in $\mathcal D(A^\beta)$ that converges to $z$ as $n\to \infty.$ Hence, \eqref{P8} gives 
\begin{align*}
&\limsup_{t\to 0}\Big|\Big|A^\beta\int_0^t S(t-s)F_2(s)ds\Big|\Big|\\
&\leq\limsup_{t\to 0}  \|t^{\beta-1}[I-S(t)]A^{\beta-1} (z-z_n)\|\notag\\
&+\limsup_{t\to 0}  \|t^{\beta-1}[I-S(t)]A^{-1}A^{\beta} z_n\|\\
&\leq \frac{\iota_\beta}{1-\beta}\|z-z_n\|+\iota_0 \limsup_{t\to 0}  t^\beta \|A^{\beta} z_n\|\\
&=  \frac{\iota_\beta}{1-\beta}\|z-z_n\|, \hspace{2cm} n=1,2,\dots
\end{align*}
Letting $n$ to $\infty$, we obtain that 
$$\lim_{t\to 0}A^\beta\int_0^t S(t-s)F_2(s)ds=0.$$
 This means that  $A^\beta\int_0^\cdot S(\cdot-s)F_2(s)ds$ is continuous at $t=0$.

To see the continuity of the last term $A^\beta \int_0^\cdot S(\cdot-s)F_1(Y(s))ds$ at $t=0$, 
 using \eqref{P6} and \eqref{P17}, we have
\begin{align} 
&\mathbb E\Big|\Big|A^\beta \int_0^t S(t-s)F_1(Y(s))ds\Big|\Big|^2 \notag\\
 & \leq\mathbb E\Big[\int_0^t \|A^\beta S(t-s)\| \|F_1(Y(s))\|ds\Big]^2\notag\\
 &\leq \iota_\beta^2 \mathbb E  \Big[\int_0^t (t-s)^{-\beta}\|F_1(Y(s))\| ds\Big]^2\notag\\
 &\leq \iota_\beta^2 t   \int_0^t (t-s)^{-2\beta}\mathbb E\|F_1(Y(s))\|^2 ds\notag\\
 &\leq 2\iota_\beta^2 t   \int_0^t (t-s)^{-2\beta}[c_{F_1}^2 \kappa^2 s^{2(\beta-\eta)} + \mathbb E\|F_1(0)\|^2] ds\notag\\
 &= 2\iota_\beta^2 \Big[c_{F_1}^2 \kappa^2 B(1+2\beta-2\eta,1-2\beta)t^{2(1-\eta)} +\frac{\mathbb E\|F_1(0)\|^2}{1-2\beta} t^{2(1-\beta)}\Big]  \label{P28}\\ 
 & \to 0 \hspace{2cm} \text{   as   } t\to 0. \notag
\end{align}
Therefore, there exists a decreasing sequence $\{t_n\}_{n=1}^\infty$ converging to $0$ such that 
$$\lim_{n\to\infty} A^\beta \int_0^{t_n} S(t_n-s)F_1(Y(s))ds=0.$$
Since $A^\beta \int_0^\cdot S(\cdot-s)F_1(Y(s))ds$ is continuous on $(0,S]$, we conclude that 
$$\lim_{t\to 0} A^\beta \int_0^t S(t-s)F_1(Y(s))ds=0,$$
i.e. $A^\beta \int_0^\cdot S(\cdot-s)F_1(Y(s))ds$ is continuous at $t=0$.

{\bf Step 2}. Let us show that $\Phi$ is a contraction mapping of $\Xi (S)$, provided that  $S>0$ is sufficiently small. 

Let $Y_1,Y_2\in \Xi (S)$ and $0\leq \theta< \frac{1}{2}.$ It follows from \eqref{P16} that 
\begin{align*}
&t^{2(\theta-\beta)}\mathbb E\|A^\theta[\Phi Y_1(t)-\Phi Y_2(t)]\|^2  \notag \\
=&t^{2(\theta-\beta)} \mathbb E\Big|\Big|\int_0^t A^\theta S(t-s)[F_1(Y_1(s))-F_1(Y_2(s))]ds\Big|\Big|^2 \notag\\
\leq&t^{2(\theta-\beta)} \mathbb E\Big[\int_0^t \|A^\theta S(t-s)\| \|F_1(Y_1(s))-F_1(Y_2(s))\|ds\Big]^2. \notag
\end{align*}
Hence,  \eqref{P6},  {\rm (H3)} and  \eqref{P13} give 
\begin{align*}
&t^{2(\theta-\beta)}\mathbb E\|A^\theta[\Phi Y_1(t)-\Phi Y_2(t)]\|^2  \notag \\
\leq& c_{F_1}^2\iota_\theta^2 t^{2(\theta-\beta)} \mathbb E\Big[\int_0^t (t-s)^{-\theta}   \|A^\eta(Y_1(s)-Y_2(s))\|ds\Big]^2 \notag\\
\leq& c_{F_1}^2\iota_\theta^2 t^{1+2(\theta-\beta)} \mathbb E\int_0^t (t-s)^{-2\theta}   \|A^\eta(Y_1(s)-Y_2(s))\|^2ds \notag\\
\leq& c_{F_1}^2\iota_\theta^2 t^{1+2(\theta-\beta)} \int_0^t (t-s)^{-2\theta}  \mathbb E \|A^\eta(Y_1(s)-Y_2(s))\|^2ds \notag\\
\leq&c_{F_1}^2\iota_\theta^2 t^{1+2(\theta-\beta)} \int_0^t  (t-s)^{-2\theta} s^{2(\beta-\eta)} \|Y_1-Y_2\|_{{\Xi (S)}}^2ds \notag\\
=&c_{F_1}^2\iota_\theta^2  B(1+2\beta-2\eta,1-2\theta) t^{2(1-\eta)}   \|Y_1-Y_2\|_{{\Xi (S)}}^2. \notag 
\end{align*}
Applying these estimates with $\theta=\eta$ and $\theta=\beta$, we conclude that
\begin{align}
&\|\Phi Y_1-\Phi Y_2\|_{{\Xi (S)}}^2 \notag\\
=&\sup_{0<t\leq S} t^{2(\eta-\beta)} \mathbb E\|A^\eta [\Phi Y_1(t)-\Phi Y_2(t)]\|^2\notag\\
&+ \sup_{0\leq t\leq S}\mathbb E\|A^\beta  [\Phi Y_1(t)-\Phi Y_2(t)]\|^2\notag\\
\leq &c_{F_1}^2  [\iota_\eta^2B(1+2\beta-2\eta,1-2\eta)+\iota_\beta^2B(1+2\beta-2\eta,1-2\beta)] \label{P29}\\
&\times S^{2(1-\eta)} \|Y_1-Y_2\|_{{\Xi (S)}}^2. \notag
\end{align}
Clearly, \eqref{P29}  shows that $\Phi$ is contractive in $\Xi (S),$ provided that  $S>0$ is sufficiently small.

{\bf Step 3}.  Let us prove:
\begin{itemize}
  \item the existence of a local mild solution in the function space in \eqref{P10}
  \item the estimate \eqref{P11}
\end{itemize}

Let $S>0$ be sufficiently small in such a way that $\Phi$ maps $\Upsilon(S)$ into itself and is contraction with respect to the norm of $\Xi (S).$ Due to  Step 1 and  Step 2, $S=T_{loc}$ can be determined by $\mathbb E\|F_1(0)\|^2,$ $ \|F_2\|_{\mathcal F^{\beta,\sigma}}^2,$ $ \|G\|_{B([0,T];L_2(U;H))}^2$ and $\mathbb E \|A^\beta\xi\|^2.$ Thanks to the fixed point theorem, there exists a unique function $X\in \Upsilon(T_{loc})$ such that $X=\Phi X$. 
This means that $X$ is a local mild solution of  \eqref{P1} in the function space:
$$
X\in  \mathcal C((0,T_{loc}];\mathcal D(A^\eta))\cap \mathcal C([0,T_{loc}];\mathcal D(A^\beta)) \hspace{1cm} \text{a.s.}
$$
In addition, thanks to Lemma \ref{Thm4}-{\rm(iii)} and  \eqref{P27}, for any $ 0<\gamma < \frac{1+2\beta}{4}-\eta,$
$$A^\eta X=A^\eta \Phi X=A^\eta \Psi X +A^\eta W_G \in  \mathcal C^\gamma((0,T_{loc}];H)  \hspace{1cm} \text{a.s.}$$
Furthermore,  \eqref{P11} is obtained from the definition of $\Upsilon(T_{loc}) $ (see \eqref{P15}).

{\bf Step 4}.  Let us finally show the uniqueness of local mild solutions. 

Let $\bar X$ be any other local mild solution to  \eqref{P1} on the interval $[0,T_{loc}],$ which belongs to the space $\mathcal C((0,T_{loc}];\mathcal D(A^\eta))\cap \mathcal C([0,T_{loc}];\mathcal D(A^\beta))$.

The formulae
\begin{align} 
 X(t)=& S(t) \xi+\int_0^t S(t-s)F_2(s)ds + \int_0^t S(t-s)G(s) dW(s)    \label{P30}  \\
&+ \int_0^t S(t-s)F_1(X(s))ds, \notag
\end{align}
and
\begin{align*} 
\bar X(t)=& S(t) \xi+\int_0^t S(t-s)F_2(s)ds + \int_0^t S(t-s)G(s) dW(s) \\
&+ \int_0^t S(t-s)F_1(\bar X(s))ds
\end{align*}
 imply that
$$X(t)-\bar X(t)=\int_0^t S(t-s)[F_1(X(s))-F_1(\bar X(s))] ds, \quad\quad 0\leq t\leq T_{loc}.$$
We can then repeat the same arguments as in  Step 2  to deduce that
\begin{align}
&\|X-\bar X\|_{\Xi (\bar T)}^2 \label{P31} \\
\leq & c_{F_1}^2  [\iota_\eta^2B(1+2\beta-2\eta,1-2\eta)+\iota_\beta^2B(1+2\beta-2\eta,1-2\beta)] \notag\\
&\times  {\bar T}^{2(1-\eta)} \|X-\bar X\|_{\Xi (\bar T)}^2\notag
\end{align}
   for any   $ 0< \bar T\leq T_{loc}$. 
Let $\bar T$ be a positive constant such that
\begin{align*}
 &c_{F_1}^2  [\iota_\eta^2B(1+2\beta-2\eta,1-2\eta)+\iota_\beta^2B(1+2\beta-2\eta,1-2\beta)]\bar T^{2(1-\eta)}<1.
\end{align*}
Thus,    \eqref{P31} gives
 $$X(t)=\bar X(t) \hspace{1cm} \text{ a.s., } 0 \leq t \leq \bar T.$$

 We repeat the same procedure with  initial time $\bar T$ and  initial value $X(\bar T)=\bar X(\bar T)$ to derive that 
$$X(\bar T +t)=\bar X(\bar T +t) \hspace{1cm} \text{ a.s., } 0 \leq t \leq \bar T.$$
 This means that $X(t)=\bar X(t)$ a.s. on a larger interval $[0,2\bar T].$ We continue this procedure by finite times, the extended interval can cover the given interval $[0,T_{loc}].$ Therefore,  for  $0\leq t\leq T_{loc},$ $X(t)=\bar X(t)$ a.s.
\end{proof}

Let us next show the differentiability  of the expectation of local mild solutions. Put 
$$Z(t)=\mathbb E X(t), \hspace{1cm} 0\leq t \leq T_{loc}.$$
\begin{theorem}  \label{Thm7}
Let the assumptions in Theorem \ref{Thm5} be satisfied. Assume that $\sigma+\eta\leq \frac{1}{2}.$ Then, 
\begin{equation} \label{P33}
\begin{cases}
Z\in  \mathcal C((0,T_{loc}];\mathcal D(A))\cap \mathcal C([0,T_{loc}];\mathcal D(A^\beta))\cap \mathcal C^1((0,T_{loc}];H),\\
\frac{dZ}{dt}, AZ \in \mathcal F^{\beta,\sigma}((0,T_{loc}];H).
\end{cases}
\end{equation}
 Furthermore, $Z$ satisfies the estimate
\begin{equation}\label{P34}
\|A^\beta Z\|_{\mathcal C} + \Big|\Big|\frac{dZ}{dt} \Big|\Big|_{\mathcal F^{\beta,\sigma}}+ \|AZ\|_{\mathcal F^{\beta,\sigma}}\leq C_{F_1,F_2,\xi},    \hspace{2cm} 0\leq t\leq T_{loc}
\end{equation}
with some constant  $C_{F_1,F_2,\xi}$ depending  on   $\mathbb E \|F_1(0)\|^2,$ $ \mathbb E \|A^\beta\xi\|^2$,  $\|F_2\|_{\mathcal F^{\beta,\sigma}}^2$ and  $T_{loc}.$
\end{theorem}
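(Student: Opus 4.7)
The plan is to average the mild-solution identity for $X$ and then apply the deterministic regularity result (Theorem~\ref{Thm2}) to the resulting integral equation. Taking expectation in the mild-solution formula, using Fubini (justified by the $L^2$-bounds in Theorem~\ref{Thm5}) and the vanishing of the stochastic integral in expectation, gives
\begin{equation*}
Z(t) = S(t)\,\mathbb{E}\xi + \int_0^t S(t-s)\tilde F(s)\,ds, \qquad \tilde F(s) = \mathbb{E}F_1(X(s))+F_2(s).
\end{equation*}
Closedness of $A^\beta$ together with $\mathbb{E}\|A^\beta\xi\|^2<\infty$ yields $\mathbb{E}\xi\in\mathcal D(A^\beta)$ with $A^\beta\mathbb{E}\xi=\mathbb{E}A^\beta\xi$. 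Hence, once I show $\tilde F\in\mathcal F^{\beta,\sigma}((0,T_{loc}];H)$, Theorem~\ref{Thm2} produces \eqref{P33} and \eqref{P34} at once, with the constant $C_{F_1,F_2,\xi}$ expressed through $\|A^\beta\mathbb{E}\xi\|$ and $\|\tilde F\|_{\mathcal F^{\beta,\sigma}}$.

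Since $F_2\in\mathcal F^{\beta,\sigma}$ by (H4), the real task is to verify the three defining conditions \eqref{P2}--\eqref{P4} for $\mathbb{E}F_1(X(\cdot))$. The growth bound \eqref{P2} follows from (H3) and Theorem~\ref{Thm5}: indeed, $\|\mathbb{E}F_1(X(t))\|\le c_{F_1}(\mathbb{E}\|A^\eta X(t)\|^2)^{1/2}+\mathbb{E}\|F_1(0)\|\le C(t^{\beta-\eta}+1)$, so $t^{1-\beta}\|\mathbb{E}F_1(X(t))\|\to 0$ as $t\to 0$. For \eqref{P3}--\eqref{P4}, (H3) and Cauchy--Schwarz reduce matters to controlling $\mathbb{E}\|A^\eta(X(t)-X(s))\|^2$ via
\begin{equation*}
\|\mathbb{E}F_1(X(t))-\mathbb{E}F_1(X(s))\| \le c_{F_1}\bigl(\mathbb{E}\|A^\eta(X(t)-X(s))\|^2\bigr)^{1/2}.
\end{equation*}

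Writing $X=\Psi X+W_G$ as in the proof of Theorem~\ref{Thm5}, the estimate for $\Psi X$ is exactly \eqref{P26} applied with $Y=X$. For the stochastic convolution I exploit the semigroup identity to decompose
\begin{equation*}
W_G(t)-W_G(s) = [S(t-s)-I]\,W_G(s) + \int_s^t S(t-r)G(r)\,dW(r),
\end{equation*}
so that It\^o's isometry combined with \eqref{P6}, \eqref{P8}, and Lemma~\ref{Thm4}\,(ii) gives, for any $0<\theta<\tfrac12-\eta$,
\begin{equation*}
\mathbb{E}\|A^\eta(W_G(t)-W_G(s))\|^2 \le C\bigl[(t-s)^{1-2\eta}+(t-s)^{2\theta}s^{1-2(\eta+\theta)}\bigr].
\end{equation*}
Multiplying by $s^{2(1-\beta+\sigma)}$, the hypothesis $\sigma+\eta\le\tfrac12$ makes the It\^o-isometry contribution $(t-s)^{1-2\eta}$ absorbed into $C(t-s)^{2\sigma}$, while the choice $\theta\in[\sigma,\tfrac12-\eta)$ and positivity of the residual $s$-exponents handle the remaining term. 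Combining with the $\Psi X$ estimate gives $s^{2(1-\beta+\sigma)}\mathbb{E}\|A^\eta(X(t)-X(s))\|^2\le C(t-s)^{2\sigma}$, yielding \eqref{P3} and, because every contribution carries a strictly positive residual power of $s$ or $t$ after absorption of $(t-s)^{2\sigma}$, also \eqref{P4}.

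The hardest part will be precisely this bookkeeping step: tracking each of the several terms in \eqref{P26} and the two terms in the stochastic-convolution increment, confirming that every surviving factor of $s$ or $t$ has a strictly positive exponent, and choosing $\rho\in(\tfrac12,1-\eta)$ and $\theta\in[\sigma,\tfrac12-\eta)$ in a compatible way. The hypothesis $\sigma+\eta\le\tfrac12$ plays an essential role: it guarantees that the pure It\^o-isometry contribution, whose natural H\"older exponent is $\tfrac{1-2\eta}{2}$, is at least $\sigma$; without this the martingale part of $X$ alone would violate the required weighted H\"older regularity and the reduction to Theorem~\ref{Thm2} would collapse.
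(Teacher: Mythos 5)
Your proposal is correct and follows essentially the same route as the paper: take expectations to reduce to the deterministic Cauchy integral formula, verify $\mathbb{E}F_1(X(\cdot))\in\mathcal F^{\beta,\sigma}$ by splitting $X=\Psi X+W_G$ (using \eqref{P26} for $\Psi X$ and the $[S(t-s)-I]A^{-\sigma}$ trick with the It\^o isometry for $W_G$, where $\sigma+\eta\le\tfrac12$ enters exactly as you describe), and then invoke Theorem \ref{Thm2}. The only cosmetic difference is that the paper fixes $\theta=\sigma$ in the stochastic-convolution increment estimate rather than allowing a range of $\theta$.
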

\begin{proof}
Throughout the proof, we use a universal constant $C$, which depends on exponents and $\mathbb E \|F_1(0)\|^2,$ $ \mathbb E \|A^\beta\xi\|^2$, $\|F_2\|_{\mathcal F^{\beta,\sigma}}^2$ and $T_{loc}$.

Since $\mathbb E\int_0^t S(t-s)G(s)dW(s)=0,$ we have an expression:
$$Z(t)=\mathbb E S(t)\xi + \int_0^t S(t-s) [\mathbb EF_1(X(s))+F_2(s)]ds.$$

First, 
let us show that 
\begin{equation}  \label{P35}
\mathbb EF_1(X(\cdot)) \in \mathcal F^{\beta,\sigma}((0,T_{loc}];H).
\end{equation}
In view of  \eqref{P17}, 
\begin{align*}
\|\mathbb EF_1(X(t))\|^2 \leq \mathbb E\|F_1(X(t))\|^2 \leq C[
 t^{2(\beta-\eta)}  +1], \hspace{1cm} 0<t\leq T_{loc}.
 \end{align*}
Thereby, 
 \begin{align*}
\|t^{1-\beta}\mathbb EF_1(X(t))\|^2  \leq C[
 t^{2(1-\eta)}  +t^{2(1-\beta)}] \to 0 \hspace{1cm} \text{ as } t\to 0.
\end{align*}
The function $\mathbb EF_1(X(\cdot))$  therefore satisfies  \eqref{P2}.

On the other hand,  {\rm (H3)}, \eqref{P21}, \eqref{P26} and \eqref{P30} give  
\begin{align*}
&\|\mathbb EF_1(X(t))-\mathbb EF_1(X(s))\|^2 \\
\leq &\mathbb E\|F_1(X(t))-F_1(X(s))\|^2 \\
\leq & c_{F_1} \mathbb E\|A^{\eta} [X(t)-X(s)]\|^2\\
\leq & C \{\mathbb E\|A^{\eta} [\Psi X(t)-\Psi X(s)]\|^2 + \mathbb E\| A^{\eta} [W_G(t)-W_G(s)] \|^2\}\\
\leq  &C s^{2(\beta-\eta-\rho)}(t-s)^{2\rho} +C s^{2(1+\beta-2\eta-\rho)}  (t-s)^{2\rho} \notag\\
&+C s^{2(1-\eta-\rho)} (t-s)^{2\rho}+C(t-s)^{\frac{3}{2}+\beta-2\eta}+C(t-s)^{2(1-\eta)}\\
&+ \mathbb E\| A^{\eta} [W_G(t)-W_G(s)] \|^2, \hspace {2cm} 0<s<t<T_{loc}
 \end{align*}
for any $\frac{1}{2}<\rho<1-\eta$. The last term in the latter inequality is evaluated by using  \eqref{P6} and \eqref{P8}: 
\begin{align*}
&\mathbb E\| A^{\eta} [W_G(t)-W_G(s)] \|^2\\
 =&\mathbb E\Big|\Big| \int_0^s A^{\eta} [S(t-r)-S(s-r)] G(r) dW(r)+\int_s^t A^{\eta} S(t-r) G(r) dW(r)\Big|\Big|^2\\
\leq & \int_0^s \|A^{\eta+\sigma} S(s-r)[S(t-s)-I]A^{-\sigma} G(r)\|_{L_2(U;H)}^2 dr\\
&+\int_s^t \|A^{\eta} S(t-r) G(r)\|_{L_2(U;H)}^2 dr\\
\leq &\iota_{\eta+\sigma}^2\|[S(t-s)-I]A^{-\sigma}\|^2 \|G\|_{B([0,T];L_2(U;H))}^2 \int_0^s (s-r)^{-2(\eta+\sigma)}  dr\\
&+\iota_\eta^2  \|G\|_{B([0,T];L_2(U;H))}^2\int_s^t (t-r)^{-2\eta}  dr\\
\leq & C s^{1-2(\eta+\sigma)}(t-s)^{2\sigma} + C(t-s)^{1-2\eta}.
\end{align*}
Thus, 
\begin{align*}
&\frac{s^{2(1-\beta+\sigma)}\|\mathbb EF_1(X(t))-\mathbb EF_1(X(s))\|^2}{(t-s)^{2\sigma}}\\
\leq &C s^{2(1-\eta-\rho+\sigma)}(t-s)^{2(\rho-\sigma)} +C s^{2(2+\sigma-2\eta-\rho)}  (t-s)^{2(\rho-\sigma)} \notag\\
&+C s^{2(2-\beta+\sigma-\eta-\rho)} (t-s)^{2(\rho-\sigma)}+Cs^{2(1-\beta+\sigma)}(t-s)^{\frac{3}{2}+\beta-2\eta-2\sigma}\\
&+Cs^{2(1-\beta+\sigma)}(t-s)^{2(1-\eta-\sigma)}+C s^{3-2(\eta+\beta)}\notag \\
&+ Cs^{2(1-\beta+\sigma)} (t-s)^{1-2(\eta+\sigma)}, \hspace{1cm} 0<s<t\leq T_{loc}.
\end{align*}
This shows that \eqref{P3} and \eqref{P4} are also valid for the function $\mathbb EF_1(X(\cdot))$. Hence, \eqref{P35} has been verified.

As a result of  {\rm (H4)} and  \eqref{P35},
$$\mathbb EF_1(X(\cdot))+F_2(\cdot)\in \mathcal F^{\beta,\sigma}((0,T_{loc}];H). $$ 
Since $\mathbb ES(t)\xi=S(t)\mathbb E\xi$ and $\mathbb E\xi\in \mathcal D(A^\beta)$, Theorem \ref{Thm2} applied to the function $Z$  provides   \eqref{P33} and  \eqref{P34}. The proof is completed.
\end{proof}
\subsection{Regular dependence of solutions on initial data} \label{sub3.2}
Let $\mathcal B_1$ and $\mathcal B_2$ be bounded balls:
$$\mathcal B_1=\{f\in \mathcal F^{\beta,\sigma}((0,T];H);  \|f\|_{\mathcal F^{\beta,\sigma}}\leq R_1\}, \quad 0<R_1<\infty,  $$
$$\mathcal B_2=\{g\in B([0,T];L_2(U;H));  \|G\|_{B([0,T];L_2(U;H))}\leq R_2\},  \quad  0<R_2<\infty, $$
 of the spaces $\mathcal F^{\beta,\sigma}((0,T];H)$ and $B([0,T];L_2(U;H))$, respectively.
Let $B_A$ be a set of random variables: 
\begin{equation*}  
B_A=\{\zeta;  \zeta\in \mathcal D(A^\beta) \, \text{   a.s. and    }  \,   \mathbb E \|A^\beta \zeta\|^2\leq R_3^2\}, \quad 0<R_3<\infty.
\end{equation*}

 According to Theorem \ref{Thm5}, for every $F_2\in \mathcal B_1, G\in \mathcal B_2$ and $\xi\in B_A$, there exists a unique local solution of \eqref{P1} on some interval $[0,T_{loc}]$. Furthermore, in view of  Step 1  and  Step 2  in the proof for Theorem 
\ref{Thm5},  we have 
\begin{equation} \label{P36}
\begin{aligned}
&\text{ there is a time    }  T_{\mathcal B_1, \mathcal B_2, B_A}>0 \text{  such that   } \\
&[0,T_{\mathcal B_1, \mathcal B_2, B_A}]\subset [0,T_{loc}]   \,  \text{   for all   } \,  (F_2,G,\xi)\in \mathcal B_1\times \mathcal B_2\times B_A. 
\end{aligned}
\end{equation}
Indeed, by  \eqref{P19}, \eqref{P20} and \eqref{P29}, $T_{loc}$ can be chosen to be any time $S$ satisfying the conditions:
\begin{align*}
\frac{\kappa^2}{2} \geq &\frac{3 \iota_\eta^2  \|G\|_{B([0,T];L_2(U;H))}^2   S^{1-2\beta}}{1-2\eta}\notag\\
&+ 12\iota_\eta^2 c_{F_1}^2 \kappa^2 B( 1+2\beta-2\eta, 1-2\eta) S^{2(1+\beta-2\eta)}\notag\\
 &+\frac{12 \iota_\eta^2 \mathbb E\|F_1(0)\|^2}{1-2\eta} S^{2(1-\beta)},
\end{align*}
\begin{align*}
\frac{\kappa^2}{2}\geq &\frac{3 \iota_\beta^2  \|G\|_{B([0,T];L_2(U;H))}^2   S^{1-2\beta}}{1-2\beta} \\
 &+12\iota_\beta^2 c_{F_1}^2 \kappa^2 B( 1+2\beta-2\eta, 1-2\beta) S^{2(1+\beta-2\eta)}\notag\\
 &+\frac{12 \iota_\beta^2 \mathbb E\|F_1(0)\|^2}{1-2\beta} S^{2(1-\beta)},
\end{align*}
and
\begin{align*}
&1>c_{F_1}^2  [\iota_\eta^2B(1+2\beta-2\eta,1-2\eta)+\iota_\beta^2B(1+2\beta-2\eta,1-2\beta)]   S^{2(1-\eta)},
\end{align*}
where $\kappa$ is defined by \eqref{P14} and \eqref{P18}.  As a consequence, we can choose  $T_{loc}$ such that it depends continuously on  $\mathbb E \|F_1(0)\|^2$,  $ \mathbb E \|A^\beta\xi\|^2$, $ \|G\|_{B([0,T];L_2(U;H))}^2$ and  $\|F_2\|_{\mathcal F^{\beta,\sigma}}^2$. 
  Thus,  \eqref{P36} follows.
 
 We are now ready to show the continuous dependence of solutions on $(F_2,G,\xi)$ in the sense specified in the following theorem.
\begin{theorem}  \label{Thm6}
Let {\rm (H1)}, {\rm (H2)}, {\rm (H3)}, {\rm (H4)} and {\rm (H5)}   be satisfied.
Let $X$ and $\bar X$ be the solutions of  \eqref{P1} for the data $(F_2,G,\xi)$ and $(\bar F_2,\bar G,\bar \xi)$ in $\mathcal B_1\times \mathcal B_2\times B_A$, respectively. Then, there exists a constant $C_{\mathcal B_1, \mathcal B_2, B_A}$ depending only on $\mathcal B_1, \mathcal B_2$ and $ B_A$ such that
\begin{align}
&t^{2\eta}\mathbb E  \|A^\eta[X(t)-\bar X(t)]\|^2+t^{2\eta}\mathbb E\|A^{\beta} [X(t)-\bar X(t)]\|^2 \label{P37}\\
&+ \mathbb E\|X(t)-\bar X(t)\|^2 \leq C_{\mathcal B_1, \mathcal B_2, B_A}[\mathbb E \|\xi-\bar \xi\|^2+ t^{2\beta}   \|F_2-\bar F_2\|_{\mathcal F^{\beta,\sigma}}^2  \notag \\
&+ t \|G-\bar G\|_{B([0,T];L_2(U;H))}^2], \hspace{1cm} 0<t<T_{\mathcal B_1, \mathcal B_2, B_A},\notag
\end{align}
and 
\begin{align}
t^{2(\eta-\beta)}& [\mathbb E  \|A^\eta[X(t)-\bar X(t)]\|^2+\mathbb E\|A^{\beta} [X(t)-\bar X(t)]\|^2] \label{P38}\\
 \leq & C_{\mathcal B_1, \mathcal B_2, B_A}[\mathbb E \|A^\beta(\xi-\bar \xi)\|^2+    \|F_2-\bar F_2\|_{\mathcal F^{\beta,\sigma}}^2\notag \\
&+ \|G-\bar G\|_{B([0,T];L_2(U;H))}^2], \hspace{1cm} 0<t<T_{\mathcal B_1, \mathcal B_2, B_A}. \notag
\end{align}
\end{theorem}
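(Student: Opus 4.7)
The plan is to subtract the two mild equations for $X$ and $\bar X$, obtaining
\[
\Delta(t) := X(t) - \bar X(t) = S(t)(\xi-\bar\xi) + \int_0^t S(t-s)\bigl[F_1(X(s))-F_1(\bar X(s))\bigr]\,ds + D(t),
\]
where
\[
D(t) = \int_0^t S(t-s)[F_2-\bar F_2](s)\,ds + \int_0^t S(t-s)[G-\bar G](s)\,dW(s),
\]
and to estimate $\Delta$ by combining the semigroup bounds \eqref{P6}--\eqref{P8}, Lemma~\ref{Thm4}, the It\^o isometry, the Lipschitz condition (H3), and a Gronwall-type absorption for the $F_1$-term. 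I would treat \eqref{P37} and \eqref{P38} separately, since they correspond to different weighted norms reflecting different regularities of the initial data.

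For \eqref{P38} the natural norm is the $\Xi(S)$-norm from the proof of Theorem~\ref{Thm5}. Let $\Phi$ denote the map \eqref{P16} with data $(F_2,G,\xi)$ and $\bar\Phi$ the analogous map with $(\bar F_2,\bar G,\bar\xi)$. The fixed-point identities $X=\Phi(X)$ and $\bar X=\bar\Phi(\bar X)$ yield
\[
X-\bar X = \bigl[\Phi(X) - \Phi(\bar X)\bigr] + \bigl[\Phi(\bar X) - \bar\Phi(\bar X)\bigr].
\]
The $\Xi(S)$-norm of the first bracket is at most $c_{F_1}C_{\eta,\beta}S^{1-\eta}\|X-\bar X\|_{\Xi(S)}$ by the contraction estimate \eqref{P29}, where $C_{\eta,\beta}$ is the constant appearing there. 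The second bracket equals $S(t)(\xi-\bar\xi)+D(t)$; its three components are bounded in $\Xi(S)$-norm via \eqref{P6} (for the initial term, with $\theta\in\{\eta-\beta,\,0\}$), Lemma~\ref{Thm4}(i) (for the $F_2$-convolution) and the It\^o isometry together with Lemma~\ref{Thm4}(ii) (for the stochastic convolution), giving a bound by $C\bigl[\mathbb E\|A^\beta(\xi-\bar\xi)\|^2+\|F_2-\bar F_2\|_{\mathcal F^{\beta,\sigma}}^2+\|G-\bar G\|_{B([0,T];L_2(U;H))}^2\bigr]$. Choosing $T_{\mathcal B_1,\mathcal B_2,B_A}$ so small that $c_{F_1}C_{\eta,\beta}S^{1-\eta}\leq \tfrac{1}{2}$, absorbing, and squaring delivers \eqref{P38}.

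For \eqref{P37} the right-hand side involves only $\mathbb E\|\xi-\bar\xi\|^2$, so I work with the weaker quantities $E(t):=t^{2\eta}\mathbb E\|A^\eta\Delta(t)\|^2$ (note the weight $t^{2\eta}$, not $t^{2(\eta-\beta)}$) and $e(t):=\mathbb E\|\Delta(t)\|^2$. Applying $A^\eta$ and the identity to the integral equation for $\Delta$ and combining \eqref{P6}, Lemma~\ref{Thm4}, the It\^o isometry, (H3), and the Cauchy--Schwarz step
\[
\mathbb E\Big[\int_0^t (t-s)^{-\theta}\|A^\eta\Delta(s)\|\,ds\Big]^2 \leq \frac{t^{1-2\theta}}{1-2\theta}\int_0^t \mathbb E\|A^\eta\Delta(s)\|^2\,ds, \quad \theta<\tfrac12,
\]
I would obtain
\[
E(t) \leq K(t) + C\, t\int_0^t s^{-2\eta} E(s)\,ds, \qquad e(t) \leq \widetilde K(t) + C\, t\int_0^t s^{-2\eta} E(s)\,ds,
\]
with $K(t),\widetilde K(t)$ nondecreasing and dominated by $C_{\mathcal B}[\mathbb E\|\xi-\bar\xi\|^2 + t^{2\beta}\|F_2-\bar F_2\|_{\mathcal F^{\beta,\sigma}}^2 + t\|G-\bar G\|_{B([0,T];L_2(U;H))}^2]$. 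The weight $s^{-2\eta}$ is integrable at $0$ because $\eta<1/2$; taking $\sup_{s\leq t}E(s)$ yields $\sup_{s\leq t}E(s) \leq K(t) + \tfrac{Ct^{2-2\eta}}{1-2\eta}\sup_{s\leq t}E(s)$, and absorption for $t\leq T_{\mathcal B_1,\mathcal B_2,B_A}$ (further shrunk if necessary, in a way depending only on $\mathcal B_1,\mathcal B_2,B_A$ and $c_{F_1}$) gives $E(t)\leq 2K(t)$. Substituting this back into the bound for $e(t)$ controls $e$ by the same right-hand side. Finally, since $\beta<\eta$ the operator $A^{\beta-\eta}=A^{-(\eta-\beta)}$ is bounded, hence
\[
t^{2\eta}\mathbb E\|A^\beta\Delta(t)\|^2 \leq \|A^{\beta-\eta}\|^2\, E(t),
\]
which combined with the preceding bounds establishes \eqref{P37}.

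The main obstacle is the singular-kernel Gronwall step in the proof of \eqref{P37}: the $F_1$-contribution to the $A^\eta$-estimate produces the integral $\int_0^t s^{-2\eta}E(s)\,ds$ involving both the singularity $(t-s)^{-2\eta}$ from the semigroup and the a priori singularity $s^{-2\eta}$ in $E$, so one faces a genuine double singularity. The absorption works only because $\eta<1/2$ forces $2-2\eta>1$, and one must verify that the resulting smallness threshold can be chosen uniformly over $(F_2,G,\xi)\in\mathcal B_1\times\mathcal B_2\times B_A$, so that \eqref{P36} is preserved after any further shrinkage of $T_{\mathcal B_1,\mathcal B_2,B_A}$.
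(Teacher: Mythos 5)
Your proposal is correct, and while the overall strategy (subtract the mild formulations, estimate weighted second moments, close with a Gronwall-type absorption) is the same as the paper's, the execution differs in three genuine ways. First, for \eqref{P38} you exploit the fixed-point structure directly, writing $X-\bar X=[\Phi(X)-\Phi(\bar X)]+[\Phi(\bar X)-\bar\Phi(\bar X)]$ and absorbing the first bracket via the contraction estimate \eqref{P29}; the paper instead re-derives the integral estimates from scratch and runs a second Gronwall argument. Your route is shorter and makes transparent why the Lipschitz dependence on the data is inherited from the contraction constant, at the cost of needing the a priori finiteness of $\|X-\bar X\|_{\Xi(S)}$ (which holds since both solutions lie in $\Upsilon(T_{loc})$). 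Second, for \eqref{P37} you apply Cauchy--Schwarz with the split $f=(t-s)^{-\theta}$, $g=\|A^\eta\Delta(s)\|$, which loads the entire semigroup singularity into the explicitly computable factor $t^{1-2\theta}/(1-2\theta)$ and leaves the \emph{regular} Gronwall kernel $t\,s^{-2\eta}$; the paper splits as $f=1$, $g=(t-s)^{-\theta}\|A^\eta\Delta(s)\|$, retains the doubly singular kernel $(t-s)^{-2\theta}s^{-2\eta}$, and therefore needs the two-stage argument (absorption on $[0,\epsilon]$ plus the generalized Gronwall Lemma \ref{Thm3} on $[\epsilon,T]$). Your version buys a genuinely simpler closing step. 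Third, you obtain the $A^\beta$ part of \eqref{P37} from the $A^\eta$ part via boundedness of $A^{\beta-\eta}$ rather than rerunning the estimate with $\theta=\beta$; this is legitimate since $0\in\rho(A)$ here. The one point to tighten: your sup-absorption proves \eqref{P37} only for $t$ below the threshold where $Ct^{2-2\eta}/(1-2\eta)\leq\tfrac12$, whereas the theorem asserts the bound on all of $(0,T_{\mathcal B_1,\mathcal B_2,B_A})$. Since your kernel $t\,s^{-2\eta}$ is integrable and $E$ is a priori bounded (both solutions satisfy \eqref{P15}), the standard Gronwall inequality already yields the estimate on the whole interval with constant $e^{CT^{2-2\eta}/(1-2\eta)}$, so no shrinkage of $T_{\mathcal B_1,\mathcal B_2,B_A}$ is actually necessary; stating this removes the only caveat in your argument.
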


In order to prove this theorem, we  use a generalized inequality of Gronwall type.
\begin{lemma} \label{Thm3}
Let $0<a\leq b,  \mu>0 $ and $\nu>0$ be constants. 
Let $f$ be a continuous and increasing function on $[0,\infty)$ and $\varphi$ be a nonnegative  bounded  function  on $[a,b]$.  If $\varphi$ satisfies the integral inequality 
$$\varphi(t) \leq f(t)+ a^{-\mu} \int_a^t (t-r)^{\nu-1}\varphi(r)dr, \quad\quad a\leq t\leq b,$$
then there exists $c>0$ such that 
$$\varphi(t)\leq c f(t),\hspace{2cm} a\leq s<t\leq b.$$
\end{lemma}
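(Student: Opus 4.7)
The inequality is a weakly singular Gronwall inequality of Henry type: because $\nu$ may be smaller than $1$, the kernel $(t-r)^{\nu-1}$ is not uniformly bounded, so the classical Gronwall argument does not apply directly. The plan is to iterate the integral inequality, use Beta-function identities to reduce the singularity degree at each step, and then close the estimate either by summing a convergent Mittag-Leffler-type series or by iterating until the kernel becomes bounded and applying the standard Gronwall lemma.

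First I would absorb the prefactor by setting $K=a^{-\mu}$, which is a finite constant since $a>0$ is fixed. Substituting the assumed inequality for $\varphi(r)$ back into the integral on the right-hand side and applying Fubini's theorem produces a double convolution whose kernel collapses via
\begin{equation*}
\int_s^t (t-r)^{\nu-1}(r-s)^{\nu-1}\,dr = B(\nu,\nu)\,(t-s)^{2\nu-1}.
\end{equation*}
Iterating this substitution $n$ times and using the monotonicity of $f$ to pull $f(t)$ out of the nested integrals (since $f(r)\leq f(t)$ for $r\leq t$), I expect to arrive by induction at the bound
\begin{equation*}
\varphi(t)\leq f(t)\sum_{k=0}^{n-1}\frac{[K\Gamma(\nu)]^k(t-a)^{k\nu}}{\Gamma(k\nu+1)}+\frac{[K\Gamma(\nu)]^n}{\Gamma(n\nu)}\int_a^t(t-r)^{n\nu-1}\varphi(r)\,dr.
\end{equation*}
Since $\varphi$ is bounded on $[a,b]$ by hypothesis, I then choose $n$ large enough that $n\nu\geq 1$; by Stirling's formula the remainder term is bounded by $C_n(b-a)^{n\nu}\|\varphi\|_\infty$ with $C_n\to 0$ as $n\to\infty$. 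Passing to the limit, the partial sums converge to the Mittag-Leffler function $E_\nu\bigl(K\Gamma(\nu)(t-a)^\nu\bigr)$, which is nondecreasing in $t$ and bounded on $[a,b]$, yielding
\begin{equation*}
\varphi(t)\leq f(t)\,E_\nu\bigl(K\Gamma(\nu)(b-a)^\nu\bigr)=:c\,f(t),\qquad a\leq t\leq b.
\end{equation*}

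The main obstacle will be the bookkeeping in the induction: I must carefully justify the Fubini interchange at each step and collapse the iterated products $B(\nu,\nu)B(2\nu,\nu)\cdots B((n-1)\nu,\nu)$ to the closed form $\Gamma(\nu)^n/\Gamma(n\nu)$, then verify that the Gamma denominators grow quickly enough to absorb the powers $[K\Gamma(\nu)(b-a)^\nu]^n$ so that the series converges. Once that accounting is in place, the remainder estimate and the passage $n\to\infty$ are routine, and the resulting constant $c$ depends only on $a^{-\mu}$, $\nu$, and $b-a$.
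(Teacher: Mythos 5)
Your proposal is correct and follows essentially the same route as the paper: iterate the inequality, collapse the convolved kernels via the Beta--Gamma identity $B(k\nu,\nu)=\Gamma(k\nu)\Gamma(\nu)/\Gamma((k+1)\nu)$ to get the partial Mittag--Leffler sum plus a remainder $\frac{[a^{-\mu}\Gamma(\nu)]^{n}}{\Gamma(n\nu)}\int_a^t(t-r)^{n\nu-1}\varphi(r)\,dr$, kill the remainder using the boundedness of $\varphi$ and Stirling's formula, and bound the resulting series by a constant depending only on $a^{-\mu}$, $\nu$ and $b$. The only cosmetic difference is that the paper enlarges the integration range to $[0,t]$ in the induction step (yielding $t^{k\nu}$ instead of your slightly sharper $(t-a)^{k\nu}$) and quotes an explicit bound for the Mittag--Leffler series from Yagi's book rather than invoking its boundedness directly.
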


\begin{proof}
Let $\Gamma$ be the gamma function. By induction, we  verify the estimate:
\begin{align}
\varphi(t)\leq &\sum_{k=0}^n a^{-k\mu} f(t)t^{k\nu} \frac{\Gamma(\nu)^k}{\Gamma(1+k\nu)} +a^{-\mu(n+1)}  \frac{\Gamma(\nu)^{n+1}}{\Gamma((n+1)\nu)}  \label{E1}\\
& \times\int_a^t (t-s)^{(n+1)\nu-1} \varphi(s)ds, \hspace{1cm} a\leq t\leq b.   \notag
\end{align}
Indeed, the case $n=0$ is obvious. Assume that this inequality holds true for $n$. Then,
\begin{align}
\varphi(t)\leq &\sum_{k=0}^n a^{-k\mu} f(t)t^{k\nu} \frac{\Gamma(\nu)^k}{\Gamma(1+k\nu)}  \label{E2}\\
&+a^{-\mu(n+1)}  \frac{\Gamma(\nu)^{n+1}}{\Gamma((n+1)\nu)} \int_a^t (t-s)^{(n+1)\nu-1} \notag\\
&\hspace{4cm}\times[f(s)+ a^{-\mu} \int_a^s (s-r)^{\nu-1}\varphi(r)dr]ds.   \notag
\end{align}
Since $f$ is increasing, we observe that 
\begin{align}
\int_a^t (t-s)^{(n+1)\nu-1} f(s)ds &\leq f(t) \frac{(t-a)^{(n+1)\nu}}{(n+1)\nu}\notag\\
&=\frac{f(t)(t-a)^{(n+1)\nu}\Gamma((n+1)\nu)}{\Gamma(1+(n+1)\nu)}.\label{E3}
\end{align}
In addition, 
\begin{align}
&\int_a^t \int_a^s (t-s)^{(n+1)\nu-1} (s-r)^{\nu-1}\varphi(r)drds  \notag\\
&\leq \int_0^t \int_0^s (t-s)^{(n+1)\nu-1} (s-r)^{\nu-1}\varphi(r)drds \notag\\
&= \int_0^t \int_r^t (t-s)^{(n+1)\nu-1} (s-r)^{\nu-1}ds\varphi(r)dr \notag\\
&= \int_0^t (t-r)^{(n+2)\nu-1} \int_0^1  (1-u)^{(n+1)\nu-1} u^{\nu-1}du  \varphi(r)dr \notag\\
&= B(\nu,(n+1)\nu)\int_0^t (t-r)^{(n+2)\nu-1}  \varphi(r)dr \notag\\
&= \frac{\Gamma(n+1)\nu)\Gamma(\nu)}{\Gamma(n+2)\nu)}\int_0^t (t-r)^{(n+2)\nu-1}  \varphi(r)dr. \label{E4}
\end{align}
Thanks to \eqref{E2}, \eqref{E3} and \eqref{E4}, the estimate  \eqref{E1}   holds true for $n+1$.

Since $\varphi$ is bounded on  $[a,b]$, the second term in the right-hand side of  \eqref{E1} is estimated by:
\begin{align*}
&a^{-\mu(n+1)}  \frac{\Gamma(\nu)^{n+1}}{\Gamma((n+1)\nu)} \int_a^t (t-s)^{(n+1)\nu-1} \varphi(s)ds \\
&\leq   \frac{a^{-\mu(n+1)}\Gamma(\nu)^{n+1}(t-a)^{(n+1)\nu}\sup_{s\in[a,t]} \varphi(s)}{(n+1)\nu)\Gamma((n+1)\nu)}.
\end{align*}
Due to  the Stirling's formula, it is known that 
$$\Gamma (x+1) \sim \sqrt{2\pi x} \Big(\frac{x}{e}\Big)^x \hspace{2cm} \text{ as } x\to \infty.$$
This  term therefore converges to zero as $n\to \infty$. As a consequence,
\begin{align*}
\varphi(t)\leq &f(t) \sum_{k=0}^\infty   \frac{[a^{-\mu}t^{\nu}\Gamma(\nu)]^k}{\Gamma(1+k\nu)}, \hspace{2cm} a \leq t \leq b.
\end{align*}
It is known that (e.g., \cite[Lemma 1.2]{yagi})
\begin{align*}
&\sum_{k=0}^\infty   \frac{[a^{-\mu}t^{\nu}\Gamma(\nu)]^k}{\Gamma(1+k\nu)}  \\
& \leq \frac{2}{\min_{0<s<\infty} \Gamma(s) \nu} (1+t[a^{-\mu}\Gamma(\nu)]^{\frac{1}{\nu}}) e^{t[a^{-\mu}\Gamma(\nu)]^{\frac{1}{\nu}}+1}, \hspace{1cm} 0\leq t<\infty.
\end{align*}
 Thus, the lemma has been proved.
\end{proof}

\begin{proof}[Proof for Theorem \ref{Thm6}]
This theorem is proved by using analogous arguments as in  the proof for Theorem \ref{Thm5}. 
We  use a universal constant $C_{\mathcal B_1, \mathcal B_2, B_A}$, which depends only on the exponents and $\mathcal B_1, \mathcal B_2$ and $ B_A.$

First, let us give an estimate for 
$$t^{2\eta}\mathbb E [ \|A^\eta[X(t)-\bar X(t)]\|^2+\|A^{\beta} [X(t)-\bar X(t)]\|^2].$$
For $0 \leq \theta <\frac{1}{2}$ and $0<t\leq T_{\mathcal B_1, \mathcal B_2, B_A}$,  \eqref{P5}, \eqref{P6} and {\rm (H3)} give 
\begin{align*}
&t^\theta \|A^\theta[X(t)-\bar X(t)]\|\\
= & \Big|\Big|t^\theta A^\theta S(t)(\xi-\bar \xi)+\int_0^t t^\theta A^\theta S(t-s)[F_1(X(s))-F_1(\bar X(s))]ds\notag\\
&+\int_0^t t^\theta A^\theta S(t-s) [F_2(s)-\bar F_2(s)]ds\notag\\
&+\int_0^t t^\theta A^\theta S(t-s) [G(s)-\bar G(s)]dW(s)\Big|\Big| \notag\\
\leq & \iota_\theta \|\xi-\bar \xi\|+\iota_\theta c_{F_1}\int_0^t  t^\theta (t-s)^{-\theta}  \|A^\eta[X(s)-\bar X(s)]\| ds\notag\\
&+\iota_\theta \|F_2-\bar F_2\|_{\mathcal F^{\beta,\sigma}} \int_0^t t^\theta (t-s)^{-\theta}s^{\beta-1}ds\notag\\
&+\Big|\Big|\int_0^t t^\theta A^\theta S(t-s) [G(s)-\bar G(s)]dW(s)\Big|\Big| \notag\\
= & \iota_\theta \|\xi-\bar \xi\|+\iota_\theta \|F_2-\bar F_2\|_{\mathcal F^{\beta,\sigma}}  B(\beta,1-\theta)t^\beta\notag\\
& +\iota_\theta c_{F_1}\int_0^t  t^\theta (t-s)^{-\theta}  \|A^\eta[X(s)-\bar X(s)]\| ds\notag\\
&+\Big|\Big|\int_0^t t^\theta A^\theta S(t-s) [G(s)-\bar G(s)]dW(s)\Big|\Big|. \notag
\end{align*}
Thus,
\begin{align}
&\mathbb E\|t^\theta A^\theta[X(t)-\bar X(t)]\|^2 \notag\\
\leq & 4\iota_\theta^2 \mathbb E \|\xi-\bar \xi\|^2+4\iota_\theta^2 \|F_2-\bar F_2\|_{\mathcal F^{\beta,\sigma}}^2   B(\beta,1-\theta)^2 t^{2\beta}\notag\\
& +4\iota_\theta^2 c_{F_1}^2 t^{2\theta} \mathbb E \Big[ \int_0^t  (t-s)^{-\theta}  \|A^\eta[X(s)-\bar X(s)]\| ds\Big]^2\notag\\
&+4\mathbb E \Big|\Big|\int_0^t t^\theta A^\theta S(t-s) [G(s)-\bar G(s)]dW(s)\Big|\Big|^2 \notag\\
\leq & 4\iota_\theta^2 \mathbb E \|\xi-\bar \xi\|^2+4\iota_\theta^2 \|F_2-\bar F_2\|_{\mathcal F^{\beta,\sigma}}^2   B(\beta,1-\theta)^2 t^{2\beta}\notag\\
& +4\iota_\theta^2 c_{F_1}^2 t^{2\theta} \mathbb E \Big[ \int_0^t  (t-s)^{-\theta}  \|A^\eta[X(s)-\bar X(s)]\| ds\Big]^2\notag\\
&+4 \int_0^t \|t^\theta A^\theta S(t-s)\|^2 \|G(s)-\bar G(s)\|_{L_2(U;H)}^2ds \notag\\
\leq & 4\iota_\theta^2 \mathbb E \|\xi-\bar \xi\|^2+4\iota_\theta^2 \|F_2-\bar F_2\|_{\mathcal F^{\beta,\sigma}}^2   B(\beta,1-\theta)^2 t^{2\beta} \notag\\
&+4\iota_\theta^2 c_{F_1}^2 t^{2\theta+1} \int_0^t   (t-s)^{-2\theta}  \mathbb E \|A^\eta[X(s)-\bar X(s)]\|^2 ds\notag\\
&+4\iota_\theta^2 \|G-\bar G\|_{B([0,T];L_2(U;H))}^2\int_0^t t^{2\theta} (t-s)^{-2\theta} ds \notag\\
\leq  & 4\iota_\theta^2 \mathbb E \|\xi-\bar \xi\|^2+4\iota_\theta^2   B(\beta,1-\theta)^2 t^{2\beta} \|F_2-\bar F_2\|_{\mathcal F^{\beta,\sigma}}^2\label{P39}\\
&+\frac{4\iota_\theta^2 t }{1-2\theta}\|G-\bar G\|_{B([0,T];L_2(U;H))}^2\notag\\
&+4\iota_\theta^2 c_{F_1}^2 t^{2\theta+1}  \int_0^t   (t-s)^{-2\theta}  \mathbb E \|A^\eta[X(s)-\bar X(s)]\|^2 ds.\notag
\end{align}

Applying these estimates  with $\theta=\beta$ and $\theta=\eta$, we have
\begin{align*}
&\mathbb E\| A^\beta[X(t)-\bar X(t)]\|^2 \\
\leq  & 4\iota_\beta^2 \mathbb E \|\xi-\bar \xi\|^2t^{-2\beta}+4\iota_\beta^2   B(\beta,1-\beta)^2  \|F_2-\bar F_2\|_{\mathcal F^{\beta,\sigma}}^2\notag\\
&+\frac{4\iota_\beta^2 t^{1-2\beta} }{1-2\beta} \|G-\bar G\|_{B([0,T];L_2(U;H))}^2\notag\\
&+4\iota_\beta^2 c_{F_1}^2 t  \int_0^t   (t-s)^{-2\beta}  \mathbb E \|A^\eta[X(s)-\bar X(s)]\|^2 ds,  \hspace{1cm} 0<t\leq T_{\mathcal B_1, \mathcal B_2, B_A},\notag
\end{align*}
and
\begin{align*}
&t^{2\eta} \mathbb E\|A^\eta[X(t)-\bar X(t)]\|^2\\
\leq  & 4\iota_\eta^2 \mathbb E \|\xi-\bar \xi\|^2+4\iota_\eta^2   B(\beta,1-\eta)^2 t^{2\beta} \|F_2-\bar F_2\|_{\mathcal F^{\beta,\sigma}}^2\notag\\
&+\frac{4\iota_\eta^2 t }{1-2\eta} \|G-\bar G\|_{B([0,T];L_2(U;H))}^2\notag\\
&+4\iota_\eta^2 c_{F_1}^2 t^{2\eta+1}  \int_0^t   (t-s)^{-2\eta}  \mathbb E \|A^\eta[X(s)-\bar X(s)]\|^2 ds,  \hspace{0.7cm} 0<t\leq T_{\mathcal B_1, \mathcal B_2, B_A}.\notag
\end{align*}
By putting 
$$q(t)=t^{2\eta}\mathbb E [ \|A^\eta[X(t)-\bar X(t)]\|^2+\|A^{\beta} [X(t)-\bar X(t)]\|^2], $$
we then obtain an integral inequality 
\begin{align}
q(t)
\leq  & 4[\iota_\beta^2 t^{2(\eta-\beta)}+\iota_\eta^2 ] \mathbb E \|\xi-\bar \xi\|^2\notag\\
&+4[\iota_\beta^2   B(\beta,1-\beta)^2t^{2\eta}+\iota_\eta^2   B(\beta,1-\eta)^2 t^{2\beta} ]  \|F_2-\bar F_2\|_{\mathcal F^{\beta,\sigma}}^2\notag\\
&+4\Big[\frac{\iota_\beta^2 t^{1+2\eta-2\beta} }{1-2\beta}+\frac{\iota_\eta^2 t }{1-2\eta}\Big] \|G-\bar G\|_{B([0,T];L_2(U;H))}^2 \notag\\
&+4 c_{F_1}^2 t^{2\eta+1}\int_0^t [ \iota_\beta^2 (t-s)^{-2\beta}+\iota_\eta^2  (t-s)^{-2\eta}]  s^{-2\eta}q(s) ds\notag\\
\leq  & C_{\mathcal B_1, \mathcal B_2, B_A}[\mathbb E \|\xi-\bar \xi\|^2+ t^{2\beta}   \|F_2-\bar F_2\|_{\mathcal F^{\beta,\sigma}}^2 \label{P40}
\\
&+ t \|G-\bar G\|_{B([0,T];L_2(U;H))}^2]\notag \\
&+4 c_{F_1}^2 t^{2\eta+1}\int_0^t [ \iota_\beta^2 (t-s)^{-2\beta}+\iota_\eta^2  (t-s)^{-2\eta}]  \notag \\
&\hspace{2cm}  \times  s^{-2\eta}q(s) ds,  \hspace{1cm} 0<t\leq T_{\mathcal B_1, \mathcal B_2, B_A}.  \notag
\end{align}

We  solve the integral inequality  \eqref{P40} as follows. Let $\epsilon>0$  denote a small parameter. For $0 \leq t \leq \epsilon,$ 
\begin{align*}
&q(t)\\
\leq   & C_{\mathcal B_1, \mathcal B_2, B_A}[\mathbb E \|\xi-\bar \xi\|^2+ t^{2\beta}   \|F_2-\bar F_2\|_{\mathcal F^{\beta,\sigma}}^2 + t \|G-\bar G\|_{B([0,T];L_2(U;H))}^2]
\\
&+4 c_{F_1}^2 t^{2\eta+1}\int_0^t [ \iota_\beta^2 (t-s)^{-2\beta}+\iota_\eta^2  (t-s)^{-2\eta}]  s^{-2\eta} ds   \sup_{s\in [0,\epsilon]} q(s)\\
=& C_{\mathcal B_1, \mathcal B_2, B_A}[\mathbb E \|\xi-\bar \xi\|^2+ t^{2\beta}   \|F_2-\bar F_2\|_{\mathcal F^{\beta,\sigma}}^2 + t \|G-\bar G\|_{B([0,T];L_2(U;H))}^2]
\\
&+4 c_{F_1}^2  [ \iota_\beta^2 B(1-2\eta,1-2\beta)t^{2(1-\beta)}+\iota_\eta^2  B(1-2\eta,1-2\eta) t^{2(1-\eta)}]     \sup_{s\in [0,\epsilon]} q(s)\\
\leq &C_{\mathcal B_1, \mathcal B_2, B_A}[\mathbb E \|\xi-\bar \xi\|^2+ \epsilon^{2\beta}   \|F_2-\bar F_2\|_{\mathcal F^{\beta,\sigma}}^2 + \epsilon \|G-\bar G\|_{B([0,T];L_2(U;H))}^2]
\\
&+4 c_{F_1}^2  [ \iota_\beta^2 B(1-2\eta,1-2\beta)\epsilon^{2(1-\beta)}+\iota_\eta^2  B(1-2\eta,1-2\eta) \epsilon^{2(1-\eta)}]     \sup_{s\in [0,\epsilon]} q(s).
\end{align*}
Taking the supremum on both the hand sides of the above inequality, we observe that
\begin{align*}
&\{1-4 c_{F_1}^2  [ \iota_\beta^2 B(1-2\eta,1-2\beta)\epsilon^{2(1-\beta)}+\iota_\eta^2  B(1-2\eta,1-2\eta) \epsilon^{2(1-\eta)}] \}    \sup_{s\in [0,\epsilon]} q(s)\\
&\leq C_{\mathcal B_1, \mathcal B_2, B_A}[\mathbb E \|\xi-\bar \xi\|^2+ \epsilon^{2\beta}   \|F_2-\bar F_2\|_{\mathcal F^{\beta,\sigma}}^2 + \epsilon \|G-\bar G\|_{B([0,T];L_2(U;H))}^2].
\end{align*}
If $\epsilon$ is taken sufficiently small so that 
\begin{equation} \label{P41}
1-4 c_{F_1}^2  [ \iota_\beta^2 B(1-2\eta,1-2\beta)\epsilon^{2(1-\beta)}+\iota_\eta^2  B(1-2\eta,1-2\eta) \epsilon^{2(1-\eta)}] \geq \frac{1}{2},
\end{equation}
 then
\begin{align}  
\sup_{s\in [0,\epsilon]} q(s)  \leq &C_{\mathcal B_1, \mathcal B_2, B_A}[\mathbb E \|\xi-\bar \xi\|^2+ \epsilon^{2\beta}   \|F_2-\bar F_2\|_{\mathcal F^{\beta,\sigma}}^2 \label{P42}\\
&+ \epsilon \|G-\bar G\|_{B([0,T];L_2(U;H))}^2]. \notag
\end{align}
As a consequence,
\begin{align}  
 q(\epsilon) \leq &C_{\mathcal B_1, \mathcal B_2, B_A}[\mathbb E \|\xi-\bar \xi\|^2+ \epsilon^{2\beta}   \|F_2-\bar F_2\|_{\mathcal F^{\beta,\sigma}}^2 \label{P43} \\
& + \epsilon \|G-\bar G\|_{B([0,T];L_2(U;H))}^2] \notag
\end{align}
for any  $\epsilon$ satisfying \eqref{P41}.

In the meantime, for $\epsilon < t \leq  T_{\mathcal B_1, \mathcal B_2, B_A}, $ 
\begin{align*}
q(t)
\leq   & C_{\mathcal B_1, \mathcal B_2, B_A}[\mathbb E \|\xi-\bar \xi\|^2+ t^{2\beta}   \|F_2-\bar F_2\|_{\mathcal F^{\beta,\sigma}}^2 + t \|G-\bar G\|_{B([0,T];L_2(U;H))}^2]
\\
&+4 c_{F_1}^2 t^{2\eta+1}\int_0^\epsilon [ \iota_\beta^2 (t-s)^{-2\beta}+\iota_\eta^2  (t-s)^{-2\eta}]  s^{-2\eta} ds \sup_{s\in [0,\epsilon]} q(s) \\
&+4 c_{F_1}^2 t^{2\eta+1}\int_\epsilon^t [ \iota_\beta^2 (t-s)^{-2\beta}+\iota_\eta^2  (t-s)^{-2\eta}]  s^{-2\eta}q(s) ds \\
\leq   & C_{\mathcal B_1, \mathcal B_2, B_A}[\mathbb E \|\xi-\bar \xi\|^2+ t^{2\beta}   \|F_2-\bar F_2\|_{\mathcal F^{\beta,\sigma}}^2 + t \|G-\bar G\|_{B([0,T];L_2(U;H))}^2]
\\
&+4 c_{F_1}^2  [ \iota_\beta^2 B(1-2\eta,1-2\beta)\epsilon^{2(1-\beta)}+\iota_\eta^2  B(1-2\eta,1-2\eta) \epsilon^{2(1-\eta)}]    \\
&\times   \sup_{s\in [0,\epsilon]} q(s) \\
&+4 c_{F_1}^2 t^{2\eta+1}\int_\epsilon^t [ \iota_\beta^2 (t-s)^{2(\eta-\beta)}+\iota_\eta^2  ] (t-s)^{-2\eta} \epsilon^{-2\eta}q(s) ds \\
\leq   & C_{\mathcal B_1, \mathcal B_2, B_A}[\mathbb E \|\xi-\bar \xi\|^2+ t^{2\beta}   \|F_2-\bar F_2\|_{\mathcal F^{\beta,\sigma}}^2 + t \|G-\bar G\|_{B([0,T];L_2(U;H))}^2]
\\
&+4 c_{F_1}^2  [ \iota_\beta^2 B(1-2\eta,1-2\beta)\epsilon^{2(1-\beta)}+\iota_\eta^2  B(1-2\eta,1-2\eta) \epsilon^{2(1-\eta)}]   \\
&\times  \sup_{s\in [0,\epsilon]} q(s) +4 c_{F_1}^2 \epsilon^{-2\eta} T^{2\eta+1} [ \iota_\beta^2 T^{2(\eta-\beta)}+\iota_\eta^2  ] \int_\epsilon^t  (t-s)^{-2\eta} q(s) ds.
\end{align*}
Lemma \ref{Thm3} then provides that 
\begin{align*}
q(t) &\\
\leq & C_{\mathcal B_1, \mathcal B_2, B_A}[\mathbb E \|\xi-\bar \xi\|^2+ t^{2\beta}   \|F_2-\bar F_2\|_{\mathcal F^{\beta,\sigma}}^2 + t \|G-\bar G\|_{B([0,T];L_2(U;H))}^2]
\\
&+4 c_{F_1}^2  [ \iota_\beta^2 B(1-2\eta,1-2\beta)\epsilon^{2(1-\beta)}+\iota_\eta^2  B(1-2\eta,1-2\eta) \epsilon^{2(1-\eta)}]   \\
&\times  \sup_{s\in [0,\epsilon]} q(s)\\
\leq & C_{\mathcal B_1, \mathcal B_2, B_A}[\mathbb E \|\xi-\bar \xi\|^2+ t^{2\beta}   \|F_2-\bar F_2\|_{\mathcal F^{\beta,\sigma}}^2 + t \|G-\bar G\|_{B([0,T];L_2(U;H))}^2]
\\
&+4 c_{F_1}^2  [ \iota_\beta^2 B(1-2\eta,1-2\beta)T^{2(1-\beta)} +\iota_\eta^2  B(1-2\eta,1-2\eta) T^{2(1-\eta)}]  \\
&  \times  \sup_{s\in [0,\epsilon]} q(s), \hspace{2cm} \epsilon < t \leq T_{\mathcal B_1, \mathcal B_2, B_A}.
\end{align*} 
Thanks to \eqref{P42},
\begin{align*}
q(t) &\\
\leq & C_{\mathcal B_1, \mathcal B_2, B_A}[\mathbb E \|\xi-\bar \xi\|^2+ t^{2\beta}   \|F_2-\bar F_2\|_{\mathcal F^{\beta,\sigma}}^2 + t \|G-\bar G\|_{B([0,T];L_2(U;H))}^2]
\\
&+C_{\mathcal B_1, \mathcal B_2, B_A}[\mathbb E \|\xi-\bar \xi\|^2+ \epsilon^{2\beta}   \|F_2-\bar F_2\|_{\mathcal F^{\beta,\sigma}}^2 \\
&+ \epsilon \|G-\bar G\|_{B([0,T];L_2(U;H))}^2], \hspace{2cm} \epsilon < t \leq T_{\mathcal B_1, \mathcal B_2, B_A}.
\end{align*}
 Hence,
\begin{align}
q(t) \leq & C_{\mathcal B_1, \mathcal B_2, B_A}  [\mathbb E \|\xi-\bar \xi\|^2+ t^{2\beta}   \|F_2-\bar F_2\|_{\mathcal F^{\beta,\sigma}}^2 \label{P44}  \\
&\hspace{1cm}+ t \|G-\bar G\|_{B([0,T];L_2(U;H))}^2], \hspace{1.5cm}\epsilon < t \leq T_{\mathcal B_1, \mathcal B_2, B_A}. \notag
\end{align}

Combining \eqref{P43} and \eqref{P44}, we conclude that  
\begin{align}
t^{2\eta} &  \mathbb E [ \|A^\eta[X(s)-\bar X(s)]\|^2+\|A^{\beta} [X(s)-\bar X(s)]\|^2]
\notag\\
=&q(t) \notag\\
\leq & C_{\mathcal B_1, \mathcal B_2, B_A}[\mathbb E \|\xi-\bar \xi\|^2+ t^{2\beta}   \|F_2-\bar F_2\|_{\mathcal F^{\beta,\sigma}}^2 \label{P45} 
\notag \\
&+ t \|G-\bar G\|_{B([0,T];L_2(U;H))}^2],  \hspace{1cm}  0 <t \leq T_{\mathcal B_1, \mathcal B_2, B_A}.
\end{align}

Second, let us  give an estimate for $\mathbb E\|X(t)-\bar X(t)\|^2$. Taking $\theta=0$ in \eqref{P39}, we have
\begin{align*}
&\mathbb E\|X(t)-\bar X(t)\|^2  \\
\leq  &  4\iota_0 \mathbb E \|\xi-\bar \xi\|^2+ 4\iota_0  B(\beta,1)^2 t^{2\beta} \|F_2-\bar F_2\|_{\mathcal F^{\beta,\sigma}}^2\notag\\
&+4 \iota_0 t \|G-\bar G\|_{B([0,T];L_2(U;H))}^2 +4\iota_0^2  c_{F_1}^2 t\int_0^t   s^{-2\eta}q(s)ds.\notag
\end{align*}
The term $t\int_0^t   s^{-2\eta}q(s)ds$ is estimated by  using \eqref{P45}:
 \begin{align*}
 & t\int_0^t   s^{-2\eta}q(s)ds\\
 \leq &C_{\mathcal B_1, \mathcal B_2, B_A} t\int_0^t   s^{-2\eta}[\mathbb E \|\xi-\bar \xi\|^2+ s^{2\beta}   \|F_2-\bar F_2\|_{\mathcal F^{\beta,\sigma}}^2 \\
&\hspace{2cm} + s \|G-\bar G\|_{B([0,T];L_2(U;H))}^2]ds\notag\\
  \leq &\frac{C_{\mathcal B_1, \mathcal B_2, B_A} t^{2(1-\eta)}\mathbb E \|\xi-\bar \xi\|^2}{1-2\eta}+\frac{C_{\mathcal B_1, \mathcal B_2, B_A} t^{2(1+\beta-\eta)}
\|F_2-\bar F_2\|_{\mathcal F^{\beta,\sigma}}^2}{1+2\beta-2\eta}\\
&+ \frac{C_{\mathcal B_1, \mathcal B_2, B_A} t^{3-2\eta}\|G-\bar G\|_{B([0,T];L_2(U;H))}^2}{2(1-\eta)}.
  \end{align*}
Therefore, 
 \begin{align}
 \mathbb E& \|X(t)-\bar X(t)\|^2  \leq  C_{\mathcal B_1, \mathcal B_2, B_A}[ \mathbb E \|\xi-\bar \xi\|^2+ t^{2\beta}   \|F_2-\bar F_2\|_{\mathcal F^{\beta,\sigma}}^2 \label{P46}\\
 &+ t \|G-\bar G\|_{B([0,T];L_2(U;H))}^2], \hspace{1cm}  0 <t \leq T_{\mathcal B_1, \mathcal B_2, B_A}. \notag
\end{align}
Thanks to  \eqref{P45} and \eqref{P46}, the estimate  \eqref{P37} has been verified.

Finally, let us  prove the estimate  \eqref{P38}. By substituting the estimate
$$\|A^\theta S(t) (\xi-\bar \xi)\|\leq \iota_{\theta-\beta} t^{\beta-\theta} \|A^\beta  (\xi-\bar \xi)\|$$
with $\theta=\beta$ and $\theta=\eta$ for 
$\|A^\theta S(t) (\xi-\bar \xi)\|\leq \iota_\theta t^{-\theta} \|\xi-\bar \xi\|,$ we obtain a similar result to \eqref{P39}:
\begin{align*}
&\mathbb E\|t^\theta A^\theta[X(t)-\bar X(t)]\|^2\notag\\
\leq  & 4 \iota_{\theta-\beta}^2 t^{2(\beta-\theta)} \mathbb E \|A^\beta  (\xi-\bar \xi)\|^2
+4\iota_\theta^2   B(\beta,1-\theta)^2 t^{2\beta} \|F_2-\bar F_2\|_{\mathcal F^{\beta,\sigma}}^2\notag\\
&+\frac{4\iota_\theta^2 t}{1-2\theta} \|G-\bar G\|_{B([0,T];L_2(U;H))}^2\notag\\
&+4\iota_\theta^2 c_{F_1}^2 t^{2\theta+1}  \int_0^t   (t-s)^{-2\theta}  \mathbb E \|A^\eta[X(s)-\bar X(s)]\|^2] ds.\notag
\end{align*}
Using the same arguments as for \eqref{P45}, we conclude that  \eqref{P38} holds true. 
 This completes the proof of the theorem.
\end{proof}

\section {Applications to stochastic PDEs}\label{section4}
We present some applications to stochastic  PDEs. These equations are considered under both Neumann and Dirichlet type boundary conditions.

\subsection {Example 1} 
Consider the stochastic PDE
\begin{equation} \label{P47}
\begin{cases}
du(x,t)=\{ [a(x) u'(x)]'  +\frac{u}{1+u^2}+ t^{\beta-1}f(t) \varphi_1(x)\}dt +g(t) \varphi_2(x)dw_t \\
  \hspace{6.25cm} \text { in } (0,1) \times (0,T),\\
  u'(0)=u'(1)=0,   \\
u(x,0)=u_0(x) \hspace{4cm} \text { in }  (0,1).
\end{cases}
\end{equation}
Here,  $w$ is a real-valued standard Wiener process. The functions  $\varphi_i (i=1,2)$ are real-valued and square integrable  on  $(0,1)$. Meanwhile, $f$ is a real-valued $\sigma$-H\"{o}lder continuous function on $[0,T]$ with $0<\sigma<\beta<1$, $g$ is a real-valued bounded function on $[0,T]$, and 
   $a$ is a real-value function on $(0,1)$   satisfying the condition
$$
a \in \mathcal C^1([0,1]) \quad \text{ and }  \quad a(x) \geq a_0, \hspace{2cm}   0<x<1
$$
with some constant $a_0>0.$ 

We  handle the equation \eqref{P47} in the Hilbert space  $L_2((0,1))$.  Let $A$ be the  realization of the differential operator 
$$ -\frac{d }{dx} [a(x)\frac{d }{dx}] +1$$
 in $L_2((0,1))$ under the Neumann type boundary conditions:
$$u'(0)=u'(1)=0.$$
 According to \cite[Theorem 2.12]{yagi},  $A$ is a sectorial operator of $L_2((0,1))$   with domain
$$\mathcal D(A)=\{u\in H^2((0,1)); u'(0)=u'(1)=0\}.$$

Using $A$,  \eqref{P47} is formulated as an abstract problem of the form \eqref{P1}.
 Here, the nonlinear operator $F_1$ is given by
$$F_1(u)=u+\frac{u}{1+u^2},$$
and the functions $F_2\colon (0,T] \to L_2((0,1))$ and $G\colon [0,T] \to L_2(\mathbb R;L_2((0,1)))$ are defined by
$$F_2(t)=t^{\beta-1} f(t) \varphi(x), \quad G(t)=g(t)\varphi(x).$$

Obviously,  
$$ G\in B([0,T];L_2(\mathbb R;L_2((0,1))))$$
 and 
$$F_2 \in \mathcal F^{\beta,\sigma} ((0,T];L_2((0,1))) \hspace{1cm} (\text{see Remark } \ref{remark1}). $$

Let  fix $\eta$ such that 
\begin{equation*}
\begin{cases}
0<\eta<\frac{1}{2}, \\
 \max\{0, 2\eta-\frac{1}{2}\}<\beta<\eta.
\end{cases}
\end{equation*}
For  $u,v\in \mathcal D(A^\eta), $ we have
\begin{align*}
&\|F_1(u)-F_1(u)\|_{L_2((0,1))}\\
&=\|u-v+\frac{u}{1+u^2} -\frac{v}{1+v^2}\|_{L_2((0,1))}\\
&\leq \|u-v\|_{L_2((0,1))} + \Big|\Big|\frac{(uv-1)(u-v)}{(1+u^2)(1+v^2)} \Big|\Big|_{L_2((0,1))}\\
&\leq C_1\|u-v\|_{L_2((0,1))}  \leq C_2\|A^\eta(u-v)\|_{L_2((0,1))}
\end{align*}
with some constants $C_1, C_2>0$.

All the structural  assumptions are therefore satisfied in $L_2((0,1))$. By using Theorems  \ref{Thm5}, \ref{Thm7} and \ref{Thm6}, we have the following results.

{\bf Claim 1} (existence of unique solutions). 
Let 
  $u_0 \in \mathcal D(A^\beta) $ a.s. with $\mathbb E \|A^\beta u_0\|^2$ $<\infty.$ Then, 
  \eqref{P47} possesses a unique local mild solution $u$ in the function space:
\begin{equation*}
u\in  \mathcal C([0,T_{loc}];\mathcal D(A^\beta)), \quad A^\eta u\in \mathcal C^\gamma ((0,T_{loc}];L_2((0,1)))  \hspace{1cm} \text{a.s.}
\end{equation*}
for any $ 0<\gamma<\frac{1+2\beta}{4}-\eta.$ 
 Furthermore, $u$ satisfies the estimate:
\begin{equation*}
\mathbb E \|A^\beta u(t)\|^2 +t^{2(\eta-\beta)} \mathbb E \|A^\eta u(t)\|^2 \leq C_{F_2,G,u_0},    \hspace{2cm} 0\leq t\leq T_{loc}.
\end{equation*}
Here, $T_{loc}$ and  $C_{F_2,G,u_0}$ are some constants depending  on the exponents and   $ \mathbb E \|A^\beta u_0\|^2,$  $\|F_2\|_{\mathcal F^{\beta,\sigma}}^2$, $ \|G\|_{B([0,T];L_2(\mathbb R;L_2((0,1))))}^2.$  

{\bf Claim 2} (differentiability of expectation). 
Let $\sigma+\eta\leq \frac{1}{2}.$ Put $z(t)=\mathbb E u(t)$, then
\begin{equation*} 
\begin{cases}
z\in  \mathcal C((0,T_{loc}];\mathcal D(A))\cap \mathcal C([0,T_{loc}];\mathcal D(A^\beta))\cap \mathcal C^1((0,T_{loc}];L_2((0,1))),\\
\frac{dz}{dt}, Az \in \mathcal F^{\beta,\sigma}((0,T_{loc}];L_2((0,1))).
\end{cases}
\end{equation*}
 Furthermore, $z$ satisfies the estimate
\begin{equation*}
\|A^\beta z\|_{\mathcal C} + \Big|\Big|\frac{dz}{dt} \Big|\Big|_{\mathcal F^{\beta,\sigma}}+ \|Az\|_{\mathcal F^{\beta,\sigma}}\leq C_{F_2,u_0},    \hspace{2cm} 0\leq t\leq T_{loc}
\end{equation*}
with some constant  $C_{F_2,u_0}$ depending  on    $ \mathbb E \|A^\beta u_0\|^2$,  $\|F_2\|_{\mathcal F^{\beta,\sigma}}^2$ and  $T_{loc}.$

{\bf Claim 3} (continuous dependence on initial data). Let $\mathcal B_1$ and $\mathcal B_2$ be bounded balls:
$$\mathcal B_1=\{f\in \mathcal F^{\beta,\sigma}((0,T];L_2((0,1)));  \|f\|_{\mathcal F^{\beta,\sigma}}\leq R_1\}, \hspace{1cm} 0<R_1<\infty,  $$
\begin{align*}
\mathcal B_2=\{& g\in B([0,T];L_2(U;L_2((0,1)))); \\
& \|G\|_{B([0,T];L_2(\mathbb R;L_2((0,1))))}\leq R_2\}, \hspace{1cm} 0<R_2<\infty,
\end{align*}
of the spaces $\mathcal F^{\beta,\sigma}((0,T];L_2((0,1)))$ and $B([0,T];L_2(U;H))$, respectively.
Let $B_A$ be a set of random variable:
\begin{equation*}  
B_A=\{\zeta;  \zeta\in \mathcal D(A^\beta) \, \text{   a.s. and    }  \,   \mathbb E \|A^\beta \zeta\|^2\leq R_3^2\}, \quad 0<R_3<\infty.
\end{equation*}

Let $u$ and $\bar u$ be the solutions of  \eqref{P47} for the data $(F_2,G,u_0)$ and $(\bar F_2,\bar G,\bar u_0)$ in $\mathcal B_1\times \mathcal B_2\times B_A$, respectively. Then, there exist  constants $T_{\mathcal B_1, \mathcal B_2, B_A}$ and $C_{\mathcal B_1, \mathcal B_2, B_A}$ depending only on $\mathcal B_1, \mathcal B_2$ and $ B_A$ such that
\begin{align*}
&t^{2\eta}\mathbb E  \|A^\eta[u(t)-\bar u(t)]\|^2+t^{2\eta}\mathbb E\|A^{\beta} [u(t)-\bar u(t)]\|^2+ \mathbb E\|u(t)-\bar u(t)\|^2 \\
\leq &C_{\mathcal B_1, \mathcal B_2, B_A}[\mathbb E \|u_0-\bar u_0\|^2+ t^{2\beta}   \|F_2-\bar F_2\|_{\mathcal F^{\beta,\sigma}}^2 \notag\\
&\hspace{2cm}+ t \|G-\bar G\|_{B([0,T];L_2(\mathbb R;L_2((0,1))))}^2], \hspace{1cm} 0<t\leq T_{\mathcal B_1, \mathcal B_2, B_A}\notag
\end{align*}
and 
\begin{align*}
&t^{2(\eta-\beta)}[\mathbb E  \|A^\eta[u(t)-\bar u(t)]\|^2+\mathbb E\|A^{\beta} [u(t)-\bar u(t)]\|^2] \\
& \leq C_{\mathcal B_1, \mathcal B_2, B_A}[\mathbb E \|A^\beta(u_0-\bar u_0)\|^2+    \|F_2-\bar F_2\|_{\mathcal F^{\beta,\sigma}}^2\notag\\
&\hspace{2cm}+ \|G-\bar G\|_{B([0,T];L_2(\mathbb R;L_2((0,1))))}^2], \hspace{1cm} 0<t\leq T_{\mathcal B_1, \mathcal B_2, B_A}. \notag
\end{align*}

\subsection {Example 2}
Let us consider the initial value problem
\begin{equation} \label{P48}
\begin{cases}
\begin{aligned}
du(x,t)=&\Big[\sum_{i,j=1}^n \frac{\partial }{\partial x_j} \Big[a_{ij}(x)\frac{\partial }{\partial x_i}u\Big] +f_1(u)+t^{\beta-1}f_2(t) \varphi(x)\Big]dt  \\
  &+g(t) dW(t)  \hspace{3cm}   \text { in } \mathbb R^n \times (0,T),\\
 u(x,0)=&u_0(x)    \hspace{4.3cm}  \text { in }  \mathbb R^n.
\end{aligned}
\end{cases}
\end{equation}
Here,  $W$ is a cylindrical  Wiener process on some separable Hilbert space $U$; $\varphi $  is a   function in $H^{-1}(\mathbb R^n)$; $f_1$ is a real-valued function on $\mathbb R$; $f_2$ is a $\sigma$-H\"{o}lder continuous function on $[0,T]$ with $0<\sigma<\beta<1$; $g$ is a   $L_2(U;H^{-1}(\mathbb R^n))$-valued bounded function on $[0,T]$;  
 $a_{ij} (x), 1\leq i,j\leq n,$  are real-valued functions in $\mathbb R^n$ satisfying the conditions:
\begin{itemize}
\item $\sum_{i,j=1}^n a_{ij}(x) z_i z_j \geq a_0 \|z\|^2,$  \hspace{0.5cm} $ z=(z_1,\dots,z_n)\in \mathbb R^n,$  \quad a.e. $ x\in \mathbb R^n$ with some constant $a_0>0$
\item $a_{ij}\in L_\infty (\mathbb R^n) $  \hspace{1cm} a.e. $x\in \mathbb R^n $
\end{itemize}

We  handle \eqref{P48} in the Hilbert space   $H^{-1}(\mathbb R^n).$
 Let $A$ be the realization of the differential operator 
$$-\sum_{i,j=1}^n \frac{\partial }{\partial x_j} [a_{ij}(x)\frac{\partial }{\partial x_i}] +1$$
 in $H^{-1}(\mathbb R^n).$ 
Thanks to \cite[Theorem 2.2]{yagi},  $A$ is a sectorial operator of $H^{-1}(\mathbb R^n)$   with domain
$\mathcal D(A)=H^1(\mathbb R^n)$. As a consequence, $(-A)$ generates an analytical semigroup on $H^{-1}(\mathbb R^n)$. 
Using $A$,  the equation \eqref{P48} is formulated as an abstract problem of the form \eqref{P1}, where  $F_1, F_2$ and $G$ are defined as follows.
The nonlinear operator $F_1$ is given by
$$F(u)=u+f_1(u).$$ 
Here, we assume that this function is defined on the domain of $A^\eta$ and satisfies the condition:
$$\|F_1(u)-F_1(v)\|_{H^{-1}(\mathbb R^n)} \leq c \|A^\eta (u-v)\|_{H^{-1}(\mathbb R^n)},  \hspace{1cm}  u,v \in  \mathcal D(A^\eta)$$
with $c>0$, $0<\eta<\frac{1}{2} $ and $
 \max\{0, 2\eta-\frac{1}{2}\}<\beta<\eta.$
The term $F_2\colon (0,T] \to H^{-1}(\mathbb R^n)$ is defined by
$$F_2(t)=t^{\beta-1} f_2(t) \varphi(x).$$
Finally, the term $G\colon [0,T] \to L_2(U;H^{-1}(\mathbb R^n))$ is defined by
$G(t)=g(t).$

As  Example 1, it is easily seen that   
$$ G\in B([0,T];L_2(U;H^{-1}(\mathbb R^n)) \quad \text{ and } \quad F_2 \in \mathcal F^{\beta,\sigma} ((0,T];H^{-1}(\mathbb R^n)). $$ 
Therefore, all the structural  assumptions are satisfied in $H^{-1}(\mathbb R^n)$. By using Theorems  \ref{Thm5}, \ref{Thm7} and \ref{Thm6},  similar claims as in Example 1 are obtained.

\end{document}